\definecolor{MyBlue}{HTML}{210cac}
\definecolor{MyCiteColor}{HTML}{0099FF}
\definecolor{MyRed}{HTML}{3E186A}
\newcommand{\eqnum}{\refstepcounter{equation}\textup{\tagform@{\theequation}}}
\newtheorem{theorem}{Theorem}
\numberwithin{theorem}{section}
\newtheorem{proposition}[theorem]{Proposition}
\newtheorem{corollary}[theorem]{Corollary}
\newtheorem{definition}[theorem]{Definition}
\newtheorem{remark}[theorem]{Remark}
\newtheorem{example}[theorem]{Example}
\newcommand{\RR}{\mathbb{R}}
\newcommand{\QQ}{\mathbb{Q}}
\newcommand{\PP}{\mathbb{P}}
\newcommand{\CC}{\mathbb{C}}
\newcommand{\ZZ}{\mathbb{Z}}
 \date{}
\title{\textbf{Nearest Points on Toric Varieties}}
\author{Martin Helmer and Bernd Sturmfels}
\begin{document}

\maketitle

\qquad \qquad   {\em   Dedicated to Alicia Dickenstein
on the occasion  of her $60$th birthday} 
\bigskip
     
\begin{abstract} \noindent
We determine the Euclidean distance degree of a projective toric variety.
This extends the formula of Matsui and Takeuchi
for the degree of the $A$-discriminant in terms of Euler obstructions.
Our primary goal is the development of reliable
algorithmic tools  for computing the points on a 
real toric variety that are closest to a given data point. \end{abstract}

\section{Introduction}

We are interested in the best approximation of
 data points in $\RR^n$ by a model that is given by
 a monomial parametrization. Such a model corresponds to a 
projective toric variety. Our result is a formula for
the generic Euclidean distance degree (gED degree \cite{DHOST})
of that variety.

Consider the problem of identifying  $d$ unknown real numbers $t_1,t_2,\ldots,t_d$
by sampling noisy products of any $k$ of these numbers. The input data consists of 
$\binom{d}{k}$ measurements $u_{i_1 i_2 \cdots i_k}$
that are supposed to be approximations of  $\,t_{i_1} t_{i_2} \cdots t_{i_k}\,$
for $1 \leq i_1 < i_2 < \cdots < i_k \leq d$.
The least squares paradigm suggests the unconstrained polynomial optimization problem
\begin{equation}
\label{eq:hypersimplexopt1}
 \hbox{Minimize the function} \quad
L(t_1,\ldots,t_d) \,\,\,\, = 
\sum_{1 \leq i_1 < \cdots < i_k \leq d } \!\!\!\!\!
\bigl(\, t_{i_1} t_{i_2} \cdots t_{i_k}\, -\,u_{i_1 i_2 \cdots i_k} \,\bigr)^2.
\end{equation}
The critical points of this problem are solutions of the system of polynomial equations
\begin{equation}
\label{eq:hypersimplexopt2}
\frac{\partial L}{\partial t_1} \,= \,
\frac{\partial L}{\partial t_2} \,=\, \cdots \,= \,
\frac{\partial L}{\partial t_d} \,=\, 0 . 
\end{equation}
The non-zero complex solutions to (\ref{eq:hypersimplexopt2})
come in clusters of $k$ solutions that differ by multiplication with a $k$-th root of unity.
The number of such clusters for generic data $u_{i_1 i_2 \cdots i_k}$
is the algebraic degree of the optimization problem (\ref{eq:hypersimplexopt1}).
 For instance, if $d=4,k=2$ then (\ref{eq:hypersimplexopt2}) is a system
of $4$ cubics in $4$ unknowns. Using {\tt Macaulay2} \cite{M2}, we find that it has
 $28$ pairs of solutions $\{t,-t\}$.
Thus, for $d=4,k=2$, the algebraic degree of the problem (\ref{eq:hypersimplexopt1}) equals $28$.
Proposition \ref{prop:hypersimplex}   generalizes that number 
to a combinatorial formula in terms of $d$ and~$k$.

The models in this paper are as follows.
We fix an integer $d \times n$-matrix
 $A = (a_1,a_2,\ldots,a_n)$ 
of rank $d$ such that $(1,1,\ldots,1) $ lies in the row space of $A$ over $\QQ$.
We allow for $A$ to have negative entries.
Each column vector $a_i$ corresponds to a (Laurent) monomial
$t^{a_i} = t_1^{a_{1i}} t_2^{a_{2i}} \cdots t_d^{a_{di}}$.
The {\em affine toric variety} $\,\tilde X_A\,$ is the closure in
$\CC^n$ of the set $\,\{ (t^{a_1}, \ldots , t^{a_n}) \,:\,t \in (\CC^*)^d \}$,
where $\CC^* = \CC \backslash \{0\}$.
This is the affine cone over the 
{\em projective toric variety} $X_A \subset \PP^{n-1}$
with the same parametrization.
Note that ${\rm dim}(X_A) = d-1$ and ${\rm dim}(\tilde X_A) = d$.
For basics on toric geometry and toric algebra we refer to the books \cite{CLS, GBCP}.

Fix a vector $\lambda = (\lambda_1 ,\ldots,\lambda_n)$ of positive reals
and consider the $\lambda$-weighted Euclidean norm on $\RR^n$, defined by
$|| x ||_\lambda = (\sum_{i=1}^n \lambda_i x_i^2)^{1/2}$.
Given  $u \in \RR^n$, we seek to 
find a real point $v \in \tilde X_A$ that is closest to $u$.
Thus, our aim is to solve the constrained optimization problem
\begin{equation}
\label{eq:opt1}
 {\rm Minimize}\,\,\, || u-v||_\lambda \,\,\,
\hbox{subject to} \,\,\, v \in \tilde X_A \cap \RR^n.
\end{equation}
This is equivalent to the unconstrained optimization problem
\begin{equation}
\label{eq:opt2}
 {\rm Minimize}\,\,  \sum_{i=1}^n \lambda_i ( u_i - t^{a_i}  )^2 \,\,\,
\hbox{over all $\,\,t = (t_1,\ldots,t_d) \in \RR^d$. } 
\end{equation}
 The number of complex critical points of (\ref{eq:opt1})
  is denoted ${\rm EDdegree}_\lambda(X_A) $. This is the ED degree
(cf.~\cite{DHOST, ottaviani2014exact}) of the toric variety $X_A$. It 
depends on $\lambda$ but is independent of $u$, since $u$ is generic.
 It governs the intrinsic algebraic complexity of finding and
 representing the exact solutions to (\ref{eq:opt1}) and (\ref{eq:opt2}). In particular, it is an upper bound for the number of local minima.
The number of complex critical points of (\ref{eq:opt2}) is the product
 ${\rm EDdegree}_\lambda(X_A) \cdot [\ZZ^d: \ZZ A]$. The index
  arises as a factor because it is the degree
 of the monomial parametrization of~$X_A$.
 
 If the weight vector $\lambda$ is chosen generically then
  ${\rm EDdegree}_\lambda(X_A)$ is independent of $\lambda$.
  We call this the {\em generic ED degree} of the toric variety $X_A$
  and we denote it by ${\rm gEDdegree}(X_A) $.
  For instance,  in  (\ref{eq:hypersimplexopt1}) we
 saw that ${\rm gEDdegree}(X_A) = 28$ for the threefold $X_A \subset \PP^5$ given~by
 \begin{equation}
 \label{eq:octahedron} \qquad
  A \,\,= \,\,\begin{pmatrix}
 1 & 1 & 1  & 0 & 0 & 0 \\
 1 & 0 & 0  & 1 & 1  & 0 \\
 0 & 1 & 0  & 1 & 0  & 1 \\
 0 & 0 & 1  & 0 & 1  & 1 
 \end{pmatrix} \,\quad = \,\,\,\, \hbox{the octahedron.}
 \end{equation}

The following formula, inspired by Aluffi \cite{AluffiCM}, will be derived  and used in this paper.

\begin{theorem} \label{thm:eins} The generic
Euclidean distance degree of the projective toric variety $X_A$~is
 \begin{equation}
\label{eq:EDtoric}
{\rm gEDdegree}(X_A) \quad = \quad \sum_{j=0}^{d-1} (-1)^{d-j-1} \cdot (2^{j+1} - 1) \cdot V_{j},
\end{equation}
 where $V_j$ is the sum of the Chern-Mather volumes of all 
 $j$-dimensional faces of~$P = {\rm conv}(A)$.
\end{theorem}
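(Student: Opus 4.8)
The plan is to combine two ingredients: a general formula expressing the generic ED degree of any projective variety through its Chern--Mather class, and an explicit, combinatorial description of the Chern--Mather class of a toric variety in terms of the faces of its polytope.

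\emph{From the ED degree to the Chern--Mather class.} First I would invoke the theorem of \cite{DHOST} that, for a projective variety $X\subseteq\PP^{n-1}$ that is not contained in the isotropic quadric $\{\sum\lambda_i x_i^2=0\}$ --- which for generic $\lambda$ is automatic --- the generic ED degree equals the sum $\sum_{j=0}^{\dim X}\delta_j(X)$ of the polar degrees of $X$, i.e.\ the multidegrees of its conormal variety. These are defined through the smooth locus of $X$ and so make sense for singular $X$, the typical situation for $X_A$. I would then apply Piene's formula, which writes each polar degree as an explicit binomial combination of the degrees $\delta_i^{\mathrm M}(X)$ of the components of the Chern--Mather class $c_{\mathrm M}(X)$ viewed in $A_*(\PP^{n-1})$,
\[
\delta_j(X)\;=\;\sum_{i=j}^{\dim X}(-1)^{\dim X-i}\binom{i+1}{j+1}\,\delta_i^{\mathrm M}(X).
\]
Summing over $j$ and using $\sum_{j=0}^{i}\binom{i+1}{j+1}=2^{i+1}-1$ turns this into
\[
{\rm gEDdegree}(X)\;=\;\sum_{i=0}^{\dim X}(-1)^{\dim X-i}\,(2^{i+1}-1)\,\delta_i^{\mathrm M}(X),
\]
the Aluffi-style formula alluded to before the statement (cf.\ \cite{AluffiCM}); it already has the shape of \eqref{eq:EDtoric} once one recalls $\dim X_A=d-1$.

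\emph{The Chern--Mather class of $X_A$.} It remains to identify $\delta_i^{\mathrm M}(X_A)$ with $V_i$. I would use MacPherson's natural transformation $c_*$ together with the orbit stratification $X_A=\bigsqcup_{\Gamma\preceq P}O_\Gamma$ indexed by the faces $\Gamma$ of $P$, where $\dim O_\Gamma=\dim\Gamma$ and $\overline{O_\Gamma}=X_\Gamma$. The crucial local fact is that for a single torus orbit one has $c_*(\mathbb 1_{O_\Gamma})=[X_\Gamma]$ with \emph{no} lower-dimensional correction terms; this follows from the product formula for $c_{\mathrm{SM}}$ and the vanishing of all but the top term of $c_{\mathrm{SM}}((\CC^*)^k)$. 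Since the local Euler obstruction of $X_A$ is constant along each orbit, applying $c_*$ to $\mathrm{Eu}_{X_A}=\sum_{\Gamma\preceq P}\mathrm{Eu}_{X_A}(\Gamma)\,\mathbb 1_{O_\Gamma}$ gives
\[
c_{\mathrm M}(X_A)\;=\;\sum_{\Gamma\preceq P}\mathrm{Eu}_{X_A}(\Gamma)\,[X_\Gamma].
\]
Pushing forward to $\PP^{n-1}$, where $[X_\Gamma]\mapsto\deg(X_\Gamma)\,[\PP^{\dim\Gamma}]$ and $\deg(X_\Gamma)$ is the normalized lattice volume of $\Gamma$, we read off $\delta_i^{\mathrm M}(X_A)=\sum_{\dim\Gamma=i}\mathrm{Eu}_{X_A}(\Gamma)\cdot\mathrm{vol}(\Gamma)$, and this quantity is exactly the sum $V_i$ of the Chern--Mather volumes of the $i$-dimensional faces. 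To see that each summand is genuinely combinatorial I would invoke the Gonz\'alez--Sprinberg/Matsui--Takeuchi description of $\mathrm{Eu}_{X_A}(\Gamma)$ purely in terms of the normal cone of $\Gamma$ in $P$; the Matsui--Takeuchi formula for the degree of the $A$-discriminant is then recovered as the $i=0$ contribution, and Theorem~\ref{thm:eins} is its extension to all~$i$.

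\emph{The main obstacle.} Since each step is assembled from existing tools, the real work is the bookkeeping, and I expect two issues to require care. First, ``${\rm gEDdegree}=\sum\delta_j$'' and Piene's formula must be used in the presence of the singularities of $X_A$; I would handle this by working throughout with the conormal variety and the ED correspondence built from the smooth locus, noting that $X_A$ is normal with singular locus a union of orbits, so the Chern--Mather machinery applies unchanged. Second, the lattice normalizations must be kept consistent: $P$ is not full-dimensional, its affine span avoids the origin, and the monomial parametrization of $X_A$ has degree $[\ZZ^d:\ZZ A]$, so the volumes entering $V_j$ and the very definition of ``Chern--Mather volume of a face'' must be taken with respect to the correct lattice for the identity $\delta_i^{\mathrm M}(X_A)=V_i$ to hold on the nose. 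I would pin these normalizations down by checking the extreme terms --- $\delta_{d-1}^{\mathrm M}(X_A)=\deg X_A$ is the normalized volume of $P$, while $\delta_0^{\mathrm M}(X_A)=\sum_{\text{vertices }v}\mathrm{Eu}_{X_A}(v)$ --- against the octahedron example \eqref{eq:octahedron}, where the formula must return $28$.
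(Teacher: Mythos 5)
Your proof is essentially correct, but it follows a genuinely different route from the paper's. The paper first proves Theorem~\ref{thm:zwei} by combining Ernstr\"om's formula \eqref{eq:ErnstromPolarDegree}, which writes each polar degree as an alternating sum of Euler characteristics $\chi(\mathrm{Eu}_{X_A^{(i)}})$ of generic linear sections, with the Matsui--Takeuchi evaluations \eqref{eq:MatsuiTakeuchiEulerObstructionZero}--\eqref{eq:MatsuiTakeuchiEulerObstruction} of those Euler characteristics in terms of the $V_j$; Theorem~\ref{thm:eins} is then a one-line consequence via \eqref{eq:sumofpolar}. You instead go through the Chern--Mather class: Piene's inversion of polar degrees against the degrees of $c_M$, the identity $\sum_{j=0}^{i}\binom{i+1}{j+1}=2^{i+1}-1$, and the toric identification $c_M(X_A)=\sum_{\Gamma}\mathrm{Eu}_{X_A}(T_\Gamma)\,[\overline{T_\Gamma}]$ obtained from MacPherson's transformation, the orbit stratification, and the fact that $c_*(\mathbf{1}_{T_\Gamma})=[\overline{T_\Gamma}]$ with no lower-order terms. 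That identification is exactly Proposition~\ref{thm:CMClass} and \eqref{eq:CM_In_Toric_CR}, which the paper \emph{deduces} from Theorem~\ref{thm:zwei} and for which it cites a direct proof in \cite{Piene2016CM}; your argument is the ``Aluffi route'' (Theorem~\ref{thm:eins} as a special case of \cite[Proposition~2.9]{AluffiCM}) that the paper mentions but does not adopt as its primary derivation, since the individual polar degrees of Theorem~\ref{thm:zwei} are wanted in their own right for the $A$-discriminant and the algorithm. Note, though, that your intermediate Piene formula, once $\delta_i^{\mathrm M}(X_A)=V_i$ is established independently, is precisely \eqref{eq:polartoric} after reindexing, so your route recovers Theorem~\ref{thm:zwei} as well; what it buys is bypassing Ernstr\"om's linear sections, at the cost of needing the toric Chern--Mather class formula as an input.

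Two caveats. Your assertion that ``$X_A$ is normal'' is false in general, and the paper emphasizes this; fortunately nothing in your argument needs it --- the dense orbit of each orbit closure lies in its smooth locus, $\mathrm{Eu}_{X_A}$ is constant on orbits, and $c_*(\mathbf{1}_{T_\Gamma})=[\overline{T_\Gamma}]$ persists for non-normal toric varieties (push forward from the normalization, which is an isomorphism over the dense orbit) --- but ``the product formula for $c_{\mathrm{SM}}$'' is not by itself a proof of that fact, so cite Ehlers/Aluffi or \cite{Piene2016CM} rather than lean on normality. Second, the lattice bookkeeping you flag resolves exactly as in the paper: $\deg(\overline{T_\Gamma})$ is the volume of $\Gamma$ normalized to $\ZZ A_\Gamma$, while $\mathrm{Eu}_{X_A}(T_\Gamma)=\mathrm{Eu}(\Gamma)\cdot[M_\Gamma:\ZZ A_\Gamma]$ by \eqref{eq:EulerObsturctionValueOnOribit}, so the index cancels in the pushforward and the coefficient of $h^{n-1-\dim\Gamma}$ is $\mathrm{Eu}(\Gamma)\,\mathrm{Vol}(\Gamma)$, the paper's Chern--Mather volume.
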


The lattice polytope $P = {\rm conv}(A)$ has dimension $d-1$ since ${\rm rank}(A) = d$.
If the toric variety $X_A$ is smooth then $P$ is simple and $V_j$ is the sum of the
normalized lattice volumes of the $j$-faces of $P$. In the smooth case,
 Theorem \ref{thm:eins} is precisely the formula given in \cite[Corollary 5.11]{DHOST}. 
What is new here is the extension to the singular case.
Indeed, $X_A$ is an arbitrary singular projective toric variety in $\PP^{n-1}$.
In particular, $X_A$ is generally not normal.

Theorem \ref{thm:eins} rests on work by Aluffi \cite{AluffiCM}, Esterov \cite{Esterov}, and Matsui-Takeuchi \cite{MT}.
The key notion is the {\em Chern-Mather volume} (or CM volume for short).
 We will define this in Section~\ref{sec:three}. One ingredient is
 the {\em local Euler obstruction} \cite[Chapter 8]{BrasSeadeSuwa}
of singular strata on $X_A$.
We now present  a formula for the
dimension and degree of the {\em $A$-discriminant} \cite{GKZ},
that is, the variety $X_A^\vee$ projectively dual to $X_A$.
The following is a variant of \cite[Theorem 1.4]{MT}:

\begin{theorem} \label{thm:zwei}
Using notation as above, the polar degrees of the projective toric variety are
\begin{equation}
\label{eq:polartoric} \delta_i(X_A) \,\, = \,\, \sum_{j=i+1}^{d} (-1)^{d-j} \binom{j}{i+1} V_{j-1} . 
\end{equation}
The codimension of the $A$-discriminant is 
$\,{\rm min}\{c \,:\,\delta_{c-1} \not= 0\}$. For that $c$,
 ${\rm degree}(X_A^\vee) = \delta_{c-1}$.
\end{theorem}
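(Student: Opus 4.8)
\noindent\emph{Proof plan.} The strategy is to pass through the Chern--Mather class of $X_A$, combining two ingredients: a general formula expressing polar degrees as linear combinations of Chern--Mather numbers, and the combinatorial description of the Chern--Mather class of a toric variety. For an irreducible projective variety $Y\subseteq\PP^{N}$, write its Chern--Mather class, pushed forward to $\PP^{N}$, as $c_M(Y)=\sum_{j\ge 0}\mu_j(Y)\,[\PP^{j}]$; thus $\mu_j(Y)$ is the $j$-th Chern--Mather number, and $\mu_{\dim Y}(Y)=\deg(Y)$. First I would recall the classical identity --- essentially due to Piene, and made fully explicit by Aluffi \cite{AluffiCM} --- that
\begin{equation*}
\delta_i(Y)\;=\;\sum_{j\ge i}(-1)^{\dim Y-j}\binom{j+1}{i+1}\,\mu_j(Y) ,
\end{equation*}
whose normalization is checked on a smooth $Y$, where $c_M(Y)$ is the total Chern class of $T_Y$ and $\delta_i(Y)$ is the classical $i$-th polar rank; in particular this gives $\delta_{\dim Y}(Y)=\deg(Y)$.

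Second, I would invoke the description of $c_M(X_A)$ from the combinatorics of $P={\rm conv}(A)$. The torus orbits of the (generally non-normal) projective toric variety $X_A\subset\PP^{n-1}$ are indexed by the faces of $P$, and the local Euler obstruction of $X_A$ along the orbit of a face is computed by Esterov \cite{Esterov}. Feeding this into MacPherson's expression of the Chern--Mather class as the characteristic class of the Euler obstruction function --- which is precisely the computation performed by Matsui and Takeuchi \cite{MT} --- shows that $\mu_j(X_A)=V_j$, the sum of the Chern--Mather volumes of the $j$-dimensional faces of $P$. The definition of the Chern--Mather volume in Section~\ref{sec:three} is engineered exactly so that this identification holds. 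Substituting $\mu_j(X_A)=V_j$ into the formula of the first paragraph, reindexing via $j\mapsto j-1$ so that a $(j-1)$-dimensional face of $P$ carries weight index $j$, and using $\dim X_A=d-1$, one obtains exactly \eqref{eq:polartoric}; this step is routine binomial bookkeeping.

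For the codimension and degree of the $A$-discriminant, I would use the standard structural fact about the conormal variety of a projective variety (see \cite{GKZ} and the discussion of polar degrees in \cite{DHOST}): for any irreducible projective variety $Y$, if $c={\rm codim}(Y^\vee)$ then $\delta_0(Y)=\cdots=\delta_{c-2}(Y)=0$, while $\delta_{c-1}(Y)={\rm degree}(Y^\vee)$. Applying this with $Y=X_A$ yields at once that the codimension of $X_A^\vee$ equals $\min\{c:\delta_{c-1}(X_A)\ne 0\}$ and that ${\rm degree}(X_A^\vee)=\delta_{c-1}(X_A)$ for that value of $c$, completing the proof.

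The part I expect to be delicate is not conceptual but a matter of reconciling conventions: Piene--Aluffi's polar formula, Matsui--Takeuchi's Chern--Mather computation (which \cite{MT} organizes around the $A$-discriminant itself, so the polar-degree identity has to be re-extracted), and the Section~\ref{sec:three} definition of the Chern--Mather volume each carry their own indexing and sign choices, and these must be matched exactly. One must also be careful that no normality hypothesis creeps in: since $X_A$ is an arbitrary, generally non-normal, projective toric variety, Esterov's Euler-obstruction input must be applied on $X_A$ directly rather than on its normalization, and the orbit--face dictionary must be the one attaching torus orbits of $X_A$ to faces of $P={\rm conv}(A)$.
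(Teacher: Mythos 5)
Your proposal is correct, but it runs through a different intermediate object than the paper does. The paper's proof combines Ernstr\"om's formula, which writes $\delta_i(X)$ as an alternating sum of Euler characteristics $\chi({\rm Eu}_{X^{(i)}})$ of generic linear sections weighted by the local Euler obstruction, with Matsui--Takeuchi's explicit evaluation of those Euler characteristics for $X_A$ in terms of the $V_j$ (their equations (3.10) and (3.16)); the identity \eqref{eq:polartoric} then drops out by substitution and binomial reindexing. You instead pass through the Chern--Mather class: Piene's singular polar formula $\delta_i=\sum_{j\ge i}(-1)^{\dim X-j}\binom{j+1}{i+1}\mu_j$ (which, after the shift $j\mapsto j-1$, is exactly \eqref{eq:holmeformula} with Chern--Mather degrees) together with the identification $\mu_j(X_A)=V_j$. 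That identification is true and your route is sound, but note the logical order in the paper: there the statement $c_M(X_A)=\sum_j V_j h^{n-j-1}$ is Proposition \ref{thm:CMClass} and is \emph{deduced from} Theorem \ref{thm:zwei}, so to avoid circularity you must ground it independently --- e.g.\ via Piene's direct computation of Chern--Mather classes of (possibly non-normal) toric varieties \cite{Piene2016CM}, whose pushforward to $\PP^{n-1}$ gives $\mu_j(X_A)=V_j$ using \eqref{eq:EulerObsturctionValueOnOribit} and ${\rm deg}[\overline{T_\alpha}]={\rm Vol}(\alpha)/[M_\alpha:\ZZ A_\alpha]$. Attributing this identification to Matsui--Takeuchi alone is slightly off: what they compute are the Euler obstructions and the section Euler characteristics used in the paper's route, not the CM class itself. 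In exchange, your route is conceptually cleaner (one universal polar formula plus one toric class computation, making the link to Aluffi and to Proposition \ref{thm:CMClass} transparent), while the paper's route uses precisely the quantities Matsui--Takeuchi already tabulate, so the substitution is mechanical. Your treatment of the last two sentences (codimension and degree of $X_A^\vee$ from the first nonvanishing polar degree, via the conormal variety) coincides with the paper's.
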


We note that the polar degrees of projective varieties
are of independent interest in the study of algorithms for real algebraic geometry. 
They govern the complexity of methods for reliably sampling points in each connected 
component of a semi-algebraic set  (cf.~\cite{BGHP1,SafSch}).
The polar degrees $\delta_i$ can also be seen as the degrees of polar varieties.
Foundational results on this topic  can be found in the work of
Kleiman  \cite{Kleiman1}, Piene \cite{piene1978polar, piene1988cycles}
and Bank {\it et al.}~\cite{BGHP2}.

Our focus in this paper is on tools for concrete computations,
starting from an integer matrix $A$.
We implemented the formulas for the polar degrees  and the gED degree 
in {\tt Macaulay2} \cite{M2}. Given an arbitrary integer matrix $A$ as above, our software computes
the quantities in  (\ref{eq:EDtoric})-(\ref{eq:polartoric}). The code and accompanying discussion can be
found at the supplementary website
\begin{center}
\hfill \url{http://martin-helmer.com/Software/toricED.html} \hfill  \eqnum\label{eq:website}
\end{center}

For a concrete illustration consider the case $d=2$,
when $X_A$ is a toric curve in $\PP^{n-1}$.
After row operations and column permutations, we may assume
that our input has the form
$$ A \quad = \quad \begin{pmatrix}
\alpha_1 & \alpha_2 & \alpha_3 & \cdots & \alpha_{n-1} & \alpha_n \\
1 & 1 & 1 & \cdots  & 1 & 1 
\end{pmatrix}, $$
where $0\leq\alpha_1 < \alpha_2 < \alpha_3 <  \cdots < \alpha_{n-1} < \alpha_n$ and
the differences $\alpha_i-\alpha_j$  are relatively prime.
One finds that the generic ED degree of the toric curve $X_A$ equals $\, 2 \alpha_n + \alpha_{n-1} 
- \alpha_2 - 2 \alpha_1 $.
This quantity is the expected number of complex solutions to the
polynomial system
\begin{equation}
\label{eq:criticaltwo}
 \frac{1}{t} \frac{\partial{L}}{\partial{s}} \,\, = \,\, 
\frac{\partial{L}}{\partial{t}} \,\, = \,\, 0, \qquad \qquad \hbox{where}
\end{equation}
\begin{equation}
\label{eq:globalL}
 L(s,t) \quad = \quad
\lambda_1 (s^{\alpha_1} t - u_1)^2 + 
\lambda_2 (s^{\alpha_2} t - u_2)^2 + \cdots + 
\lambda_n (s^{\alpha_n} t - u_n)^2 . 
\end{equation}
A priori knowledge of the ED degree is useful for optimization because
it furnishes an upper bound on the number of local minima of $L$.
The following numerical example illustrates this.

\begin{example} \label{ex:rnc} \rm
Let $n = 7$ and $\alpha = (0,1,2,3,4,5,6)$, so $X_A$ is the
rational normal curve in $\PP^6$.  The ED degree is $16$.
 The weight vector $\lambda = (1,1,1,1,1,1,1) $ exhibits the generic behavior,
 by Proposition \ref{ref:eqholds}.   So, we
 fix unit weights and use standard Euclidean distance.

Consider the data vector $u = (11, 1, 3, 1, 3, 1, 11)$ in $\RR^7$. We seek to
find the real point on the surface $\tilde X_A \cap \RR^7$ that is
located closest to $u$. Note that
we may regard $u$ as the vector of coefficients of a binary sextic, and hence
 as a symmetric tensor of format 
$2 {\times} 2 {\times} 2 {\times} 2 {\times} 2 {\times} 2 $.  See
 \cite[\S 8]{DHOST} or \cite[\S 4]{ottaviani2014exact}.
In that interpretation, our goal would be
to find the {\em best rank~$1$ approximation of the tensor} $u$.
We do this by minimizing the squared-distance function
$$ L(s,t) \,=\,
(t-11)^2+(st-1)^2+(s^2t-3)^2+(s^3t-1)^2+(s^4t-3)^2+(s^5t-1)^2+(s^6t-11)^2 .$$
As expected, the system  (\ref{eq:criticaltwo}) has $16$ complex solutions.
Precisely  eight of these $16$ are real. By the Second Derivative Test,
four of these eight are found to be local minima. They are
$$ 
\begin{matrix}
s &  t & L(s,t) \\
1 & 4.4285714285714285714 & 125.71428571428571428 \\
4.5086875578349189693 &  
0.0012891163419679352
 & 139.66300592712833700 \\
.22179403366779357295 &  10.829114809514133306 & 139.66300592712833700 \\
-1 &  3.5714285714285714283 & 173.71428571428571429 \\
\end{matrix}
$$
The global minimum is attained at $(s,t) = \bigl(1,\, 31/7 \bigr)$,
with value $L(s,t) = 880/7 $. \hfill $\diamondsuit$
\end{example}

Section~\ref{sec:three} develops the relevant results from algebraic geometry.
After defining polar degrees, Euler obstructions, and CM volumes,
we prove Theorems~\ref{thm:eins} and \ref{thm:zwei}.
Section~\ref{sec:four} starts by illustrating
these results for toric surfaces $(d=3)$. We then focus on
toric hypersurfaces in $\PP^{n-1}$.
These are defined by a single binomial,
and their conormal varieties are toric too.
We write these in terms of a Cayley polytope, and we 
express (\ref{eq:EDtoric})-(\ref{eq:polartoric}) in terms of
the binomial's exponents. In Section~\ref{sec:five} we derive the discriminants
in $\lambda$ and $u$ whose nonvanishing ensures that
${\rm gEDdegree}(X_A)$ correctly counts the complex critical points
of (\ref{eq:opt1}). We also discuss the tropicalization of 
the conormal variety of $X_A$, along the lines of \cite{DFS, DT}. 
We end the paper by returning to its beginning:
a formula for the generic ED degree of the hypersimplex reveals
the intrinsic algebraic complexity of
learning $d$ numbers from noisy $k$-fold products.

\section{Euler Obstructions and Chern-Mather Volumes}
\label{sec:three}

The (generic) ED degree of a projective variety
$X \subset \PP^{n-1}$ is the sum of the polar degrees of $X$.
The following formula was derived in \cite[Theorem 5.4]{DHOST}
and used in  \cite[Corollary 3.2]{ottaviani2014exact}:
 \begin{equation}
\label{eq:sumofpolar}
\mathrm{gEDdegree}(X)\,\,=\,\, \, \delta_0(X)  + \delta_1(X) + \cdots + \delta_{n-1}(X).
\end{equation}
Many authors, including Fulton \cite{fulton},
Holme \cite{holme1988geometric} and Piene \cite{piene1978polar},
define $\delta_j(X)$ as the degree of the $j$-th \textit{polar variety} of $X$ with respect to a general linear subspace $\,\ell_j=\PP^{j+{\rm codim}(X)}\subset \PP^{n-1}$:
$$ P_j\,\,=\,\,\,\overline{\left\lbrace x \in X_{\mathrm{smooth}} \; | \; \dim( T_x X \cap \ell_j )\,\geq \,j+1 
\right\rbrace} \,\,\, \subset\,\,\, \PP^{n-1}.$$
Following Kleiman \cite{Kleiman1}, we can also define $\delta_j(X) $
 using the multidegree of the conormal variety ${\rm Con}(X)$.
 This approach is used in \cite{AluffiCM}. It is explained in \cite[\S 5]{DHOST} after equation (5.3).
In practice, we can use the command {\tt multidegree} in {\tt Macaulay2},
as shown in Example \ref{ex:surfaceinP5}.

If $X \subset \PP^{n-1}$ is smooth then
its polar degrees can be expressed in terms of the Chern classes
of the tangent bundle. Holme 
\cite[page 150]{holme1988geometric} and Piene \cite[Thm.~3]{piene1988cycles} 
give the formula
\begin{equation}
\label{eq:holmeformula}
 \delta_i(X) \,\,\, = \,\,\, \sum_{j=i+1}^{d} (-1)^{d-j} \cdot \binom{j}{i+1} \cdot {\rm deg}(c_{d-j}(X)).
 \end{equation}
  This formula also covers the singular case (as shown by Piene \cite{piene1988cycles}) if we replace the Chern class with the Chern-Mather class. 
 This is the approach to be pursued in this section.
 We shall develop the combinatorial meaning of the formula \eqref{eq:holmeformula} 
 in the case where $X_A$ is an arbitrary singular projective toric variety.
As a consequence, we obtain a practical algorithm,
made available in~\eqref{eq:website},
for computing the polar degrees and the generic ED degree of $X_A$.
We begin by explaining the relevant results of Esterov \cite{Esterov}
and Matsui and Takeuchi  \cite{MT}. These will enable us to
derive  Theorems~\ref{thm:eins} and \ref{thm:zwei}.
 
As above,  $A = (a_1,a_2,\ldots,a_n)$ is an integer $d \times n$-matrix of rank $d$ 
with $(1,1,\ldots,1) $ in its row space. The columns $a_i$ span the
semigroup $\mathbb{N} A$ and  the lattice $\ZZ A$, both in $\ZZ^d$.
The polytope $P = {\rm conv}(A)$ has dimension $d-1$ and it lives in $\RR^d$.
Let $\alpha$ be an $(s-1)$-dimensional face of $P$. Its span  $\RR \alpha$ is a
 linear subspace of dimension $s$ in $\RR^d$. The intersection
 $M_\alpha := \RR \alpha \cap \ZZ^d$  is a lattice of rank $s$.
  The quotient group is also free abelian:
  $ \ZZ^d/M_\alpha \,\simeq \, \ZZ^{d-s}$.
  
Let $A_\alpha$ denote the set of all  columns $a_i$ of $A$ that lie in $\alpha$.
The lattice $\ZZ A_\alpha$ spanned by that set is a subgroup of finite index in $M_\alpha$.
We also consider the image of the set of columns of $A$ in $\ZZ^d/M_\alpha$.
This is a $(d-s)$-dimensional
vector configuration,  to be denoted by $A /\alpha $.
We wish to stress that
the toric varieties in this paper are generally not normal,
and all our volumes are understood in the normalized integer sense 
that is customary in toric geometry. 

\begin{definition} \label{def:subdiagram} \rm
Fix two faces $\alpha, \beta$ of $P$ such that $\beta \subset \alpha$. After a change of 
coordinates, we may assume that the origin in $\ZZ^d$ is contained in the face $\beta$. 
We write $A_\alpha/\beta$ for the image of the finite set $A_\alpha$
in the  free abelian group $M_\alpha/M_\beta$. Its convex hull
${\rm conv}(A_\alpha/\beta)$ is a polytope of  dimension
$r = \dim(\alpha)-\dim(\beta)$ in the real vector space
$(M_\alpha/M_\beta) \otimes_\ZZ \RR = \RR \alpha/\RR \beta \simeq \RR^{r}$.

We define the
 {\em subdiagram volume} of $\beta$ in $\alpha$ to be the positive integer
 \begin{equation}
 \label{eq:SDV}
\mu(\alpha/\beta)\,\,=\,\,{\rm Vol} \bigl(\,
{\rm conv}(A_\alpha/\beta) \setminus
{\rm conv}((A_\alpha/\beta )\backslash \{0\}) \,\bigr) 
\end{equation}
where ${\rm Vol}$ is the $r$-dimensional volume that is normalized with respect to the 
 lattice $M_{\alpha}/M_\beta$.
\end{definition}

The notion of subdiagram volume is also defined in \cite[Definition 3.8]{GKZ} and in \cite[Definition 4.5]{MT},
but their notation and normalization conventions are  slightly different.

\begin{remark} \label{rmk:adapted} \rm
To compute the subdiagram volume in (\ref{eq:SDV}), we 
use coordinates on $ \ZZ^d$ that are adapted to the inclusions
$ M_\beta \subset  M_{\alpha} \subset \ZZ^d$.
Changing coordinates on $\ZZ^d$ corresponds to 
integer row operations on $A$. We
shall use the following procedure to carry this out:
\begin{itemize}
\item First reorder the columns of $A$ so that
those in $\beta$ come first, followed by those in $\alpha \backslash \beta$, and the remaining columns last. In other words, we write $A$ in block form as 
$$A \,\,= \,\,\bigl( A_{\beta}, \,A_{\alpha \backslash \beta}, \,A_{P \backslash \alpha}\bigr).$$
\item Next compute the {\em Hermite normal form}  
of $A$.  It has the triangular block structure
$$ A' \quad = \quad 
\bordermatrix{ & \,\,\beta &\, \alpha \backslash \beta & P \backslash \alpha \cr
 &\,\,  * & * & *  \cr
 &\, \,0 & C & *  \cr
&\, \,0 & 0 & * \cr }.
$$
\end{itemize} 
Note that $X_A = X_{A'}$. The integer matrix $C$ has $r$ rows 
where $r = \dim(\alpha)-\dim(\beta)$.
Restricting to these $r$ rows corresponds to the appropriate projection
 $\ZZ^n \rightarrow    \ZZ^r \simeq M_\alpha/M_\beta$.                                                                                                                                                                                                                                                                                                                                                                                                                                                                                      
  To find the subdiagram volume in (\ref{eq:SDV}), we 
  may use the normalized $r$-dimensional volumes of 
  the polytopes ${\rm conv}(C \cup \{0\}) $ and ${\rm conv}(C)$. 
  These considerations imply  the following formula:
\begin{equation}
\label{eq:SDV2}
 \mu(\alpha/\beta) \,\,= \,\,{\rm Vol}({\rm conv}(C \cup \{0\}) ) - {\rm Vol}({\rm conv}(C)). 
 \end{equation}
\end{remark}

MacPherson \cite{macpherson1974chern} introduced the local Euler obstructions in singularity theory.
See the book \cite{BrasSeadeSuwa} for subsequent developments.
Ernstr{\"o}m \cite{ernstrom1997plucker} related this to 
polar degrees and dual varieties.
For the case of toric varieties, the local Euler obstructions admit a combinatorial 
description in terms  of
subdiagram volumes. This was developed by Esterov \cite[\S2.5]{Esterov} and refined by Matsui and Takeuchi 
\cite[\S4.2]{MT}.
We shall present a review of these results, modified to use the notation above.
The matrix $A$ and the polytope $P = {\rm conv}(A)$ are as before.

 \begin{definition} \label{def:Eu}   \rm
Let $\beta$ be a face of $P$. The {\em Euler obstruction} of $\beta$ is an integer
${\rm Eu}(\beta)$ that depends on the point configuration $A$.
 It is defined recursively by the following relations:
\begin{enumerate}
\item ${\rm Eu}(P)\,\,=\,\,1,$
\item $\displaystyle {\rm Eu}({\beta}) \quad =\sum_{{\alpha} {\rm \;s.t.}\; {\beta}{\;\rm is\; a} \atop
{\rm proper\; face \; of \;} {\alpha}} 
\!\!\!\!
(-1)^{\dim({\alpha})-\dim({\beta})-1}\cdot \mu(\alpha/\beta) \cdot {\rm Eu}(\alpha).$
\end{enumerate}
If $X_A$ is smooth along the orbit given by the face $\beta$ then
${\rm Eu}({\beta})=1$. We note that, as discussed above, the lattice indices in
\cite[Theorem 4.7]{MT} are subsumed in Definition \ref{def:subdiagram}.
See also \cite[Corollary 1.11.3]{Nod}.
\end{definition}

Let $\beta$ be a face of $P= {\rm conv}(A)$ and $T_{\beta}$ the
corresponding orbit. Let ${\rm Eu}_{X_{A}}:X_A\to \ZZ$ be the local Euler obstruction of $X_A$ as defined by \cite{macpherson1974chern} and \cite[Chapter 8]{BrasSeadeSuwa}.
Note that ${\rm Eu}_{X_{A}}$ is constant on the orbits given by the faces of $P$. Let ${\rm Eu}_{X_{A}}(T_{\beta})$ denote the value of ${\rm Eu}_{X_{A}} $ for any point in $T_{\beta}$. By Theorem 4.7 of Matsui and Takeuchi \cite{MT} we have that 
\begin{equation}
{\rm Eu}_{X_{A}}(T_{\beta})= {\rm Eu}(\beta)\cdot [M_{\beta}:\ZZ A_{\beta}] .\label{eq:EulerObsturctionValueOnOribit}
\end{equation}
Using the Euler obstruction of Definition \ref{def:Eu}, we now define the Chern-Mather (CM) volume.

\begin{definition} \label{def:Vj} \rm The {\em Chern-Mather volume} of a face $\beta$ of $P$
is an integer that depends on~$A$. It is the product
${\rm Vol}(\beta){\rm Eu}(\beta)$  of the normalized volume and the Euler obstruction of 
$\beta$.
As in Theorem~\ref{thm:eins},
we write $V_j$ for the sum of the CM volumes of the
 $j$-dimensional faces of~$P$:
 \begin{equation}
 \label{eq:Vjay}
V_j \quad =\sum_{\beta {\rm \; face \; of\;} P \atop  \dim(\beta)=j}
\! {\rm Vol}(\beta){\rm Eu}(\beta).
\end{equation}
We chose to use the term ``volume'' even though
the integers ${\rm Eu}(\beta)$ and $V_j$ can be negative.
\end{definition}

\begin{remark} \rm
The primary aim of Matsui and Takeuchi in \cite{MT} was to compute
the dimension and degree of the $A$-discriminant  $X_A^\vee$.
These are given by the first non-zero polar degree:
if $\delta_0 = \cdots = \delta_{c-2} = 0$
and $\delta_{c-1} > 0$ then ${\rm codim}(X_A^\vee) = c$
and ${\rm degree}(X_A^\vee) = \delta_{c-1}$.
This is essentially the content of \cite[Theorem 1.4]{MT}.
However, it is important to note that the quantities
$\delta_\bullet$ in \cite[(1.6)]{MT}
are \underbar{not} the polar degrees of $X_A$. Instead, they
are the alternating~sums 
$$
\delta_0 \,,\,\,
\delta_1 - 2 \delta_0 \,, \,\,
\delta_2 - 2 \delta_1 + 3 \delta_0\,,\,\,
\delta_3 - 2 \delta_2 + 3 \delta_1 - 4 \delta_0 \,, \,\,
\delta_4 - 2 \delta_3 + 3 \delta_2 - 4 \delta_1 + 5 \delta_0\,,\, \,\ldots.
$$
Note that the first non-zero number in this list also gives
the codimension and degree of  $X_A^\vee$.

We prefer the direct formulation, just using the polar degrees, given
in the second and third sentence of Theorem~\ref{thm:zwei}.
Formula (\ref{eq:polartoric}) writes the polar degrees in terms of
CM volumes.
\end{remark}

\begin{proof}[Proof of Theorem \ref{thm:zwei}]
For any subvariety $X$ of $\PP^{n-1}$,
the $i^{th}$ polar degree can be expressed in terms of 
the Euler obstructions of linear sections of $X$.
Ernstr\"om  \cite[Theorem 2.2]{ernstrom1997plucker} proves
\begin{equation}
\delta_{i}(X)\,\,=\,\,(-1)^{{\rm dim}(X)-i} \left( \chi( {\rm Eu}_{X^{(i)}})
-2\chi( {\rm Eu}_{X^{(i+1)}})+\chi( {\rm Eu}_{X^{(i+2)}}) \right), \label{eq:ErnstromPolarDegree}
\end{equation} where $X^{(j)}=X \cap H_1 \cap \cdots \cap H_j$ for general hyperplanes $H_{\ell}$ in $\PP^{n-1}$.
In their proof of \cite[Theorem 1.4]{MT},  Matsui and Takeuchi give an
explicit expression for the terms in \eqref{eq:ErnstromPolarDegree}
when $X=X_A$ and ${\rm dim}(X)=d-1$.
 Specifically, the equations (3.16) and (3.10) in \cite{MT} show that
 \begin{equation}
\chi( {\rm Eu}_{X_A^{(0)}})\,\,\,=\,\,\,\chi( {\rm Eu}_{X_A})\,\,=\,\,V_0 \qquad {\rm and} \qquad \qquad
\label{eq:MatsuiTakeuchiEulerObstructionZero}
\end{equation}
\begin{equation} \qquad 
\chi( {\rm Eu}_{X_A^{(i)}})\,\,=\,\,\sum_{j=i}^{d-1} (-1)^{j-i}{j-1 \choose i-1} V_j \quad  {\rm for\;} 
\,i=1, \ldots, d-1. \label{eq:MatsuiTakeuchiEulerObstruction}
\end{equation} 
Substituting \eqref{eq:MatsuiTakeuchiEulerObstructionZero} and
 \eqref{eq:MatsuiTakeuchiEulerObstruction}
 into \eqref{eq:ErnstromPolarDegree}  gives the formula
 $$
\delta_{0}(X_A)\,\,=\,\,
(-1)^{d-1} \left(V_0 -2\sum_{j=1}^{d-1} (-1)^{j-1} V_j + \sum_{j=2}^{d-1} (-1)^{j}{(j-1)} V_j \right).
$$
Similarly, for $i=1,\ldots, d-1$ we obtain
$$
\begin{matrix}
\delta_{i}(X_A)\,=\,(-1)^{d-1} \left( \sum_{j=i}^{d-1} (-1)^{j}{j-1 \choose i-1} 
V_j-2\sum_{j=i+1}^{d-1} (-1)^{j-1}{j-1 \choose i} V_j 
+ \sum_{j=i+2}^{d-1} (-1)^{j}{j-1 \choose i+1}  V_j \right). 
\end{matrix}
$$
By reindexing the two summations above, and by collecting terms, we
obtain the more compact expression for the polar degrees 
given in (\ref{eq:polartoric}). This completes the proof.
\end{proof}

\begin{proof}[Proof of Theorem \ref{thm:eins}]
This follows from Theorem \ref{thm:zwei} using the formula (\ref{eq:sumofpolar}).
\end{proof}

We next justify why we chose the term ``Chern-Mather volume'' for the quantities $V_j$ 
in Definition \ref{def:Vj}.  The {\em Chern-Mather class}  is a generalization 
of the total Chern class (of the tangent bundle) to singular varieties. 
See \cite[Section 10.6]{BrasSeadeSuwa}
 or \cite[Example 4.29]{fulton} 
for the definition.
Piene \cite{piene1988cycles} expressed
 the Chern-Mather class of a projective variety as an alternating sum of polar degrees. 
  Her formula leads to the following identification of the Chern-Mather class of a
  toric variety $X_A$  with the Chern-Mather volumes~$V_j$ of its matrix $A$.
  We regard the Chern-Mather class of $X_A$ as an element in 
the Chow ring $A^*(\PP^{n-1}) \cong \ZZ[h]/ \langle h^n \rangle$ 
of the ambient projective space $\PP^{n-1}$.
Here $h$ denotes the hyperplane class.

\begin{proposition} \label{thm:CMClass}
The Chern-Mather class of the projective toric variety $X_A \subset \PP^{n-1}$ equals
\begin{equation}\label{eq:CMToric}
c_M(X_A)\,\,=\,\,\sum_{j=0}^{d-1} V_j \cdot h^{n-j-1}\,\,\, \in \,\,A^*(\PP^{n-1})\,\cong 
\,\ZZ[h]/ \langle h^n \rangle.
\end{equation}
 In particular, the CM volume $V_j$ is the degree of the dimension $j$
  Chern-Mather class of $X_A$.
\end{proposition}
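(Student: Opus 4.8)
The plan is to deduce the proposition directly from Piene's expression for the Chern-Mather class as an alternating sum of polar degrees, combined with the explicit formula for the polar degrees of $X_A$ established in Theorem~\ref{thm:zwei}. Concretely, Piene \cite{piene1988cycles} writes the Chern-Mather class of a $(d-1)$-dimensional projective variety $X \subset \PP^{n-1}$ in the form
\begin{equation*}
c_M(X) \,\,=\,\, \sum_{i=0}^{d-1} \Bigl( \sum_{j=0}^{i} (-1)^{i-j} \binom{n-1-j}{n-1-i} \delta_{j-1}(X) \Bigr) h^{n-1-i},
\end{equation*}
the standard inversion of formula \eqref{eq:holmeformula} that expresses the degrees of the Chern-Mather classes in terms of the polar degrees (with the convention $\delta_{-1}(X) = \deg(X)$ playing no special role here since $V_0$ is treated via the recursion). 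The first step is thus to pin down the precise form of Piene's formula and record it as the coefficient extraction problem: the degree of the dimension-$j$ piece of $c_M(X_A)$ is a fixed integer linear combination of the $\delta_i(X_A)$.

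The second step is to substitute the formula \eqref{eq:polartoric} from Theorem~\ref{thm:zwei} into Piene's alternating sum and verify that, after interchanging the order of summation and collecting the coefficient of each $V_j$, the entire expression collapses to $V_j \cdot h^{n-j-1}$. This is a purely combinatorial identity: one must check that for each fixed $j$, the accumulated coefficient of $V_{j-1}$ (or $V_j$, depending on the indexing chosen) coming from the nested binomial sums equals $1$ when the dimensions match and $0$ otherwise. The relevant identity is a telescoping/Vandermonde-type cancellation among the products of binomial coefficients $\binom{k}{i+1}\binom{n-1-j}{n-1-i}$ summed against the signs $(-1)^{d-k}(-1)^{i-j}$; since Theorem~\ref{thm:zwei} is already the inverse relationship to \eqref{eq:holmeformula} with Chern-Mather classes in place of Chern classes, this cancellation is essentially forced, and the cleanest route is to observe that \eqref{eq:polartoric} and \eqref{eq:holmeformula} (with $\deg(c_{d-j})$ replaced by $V_{j}$) are mutually inverse linear transformations, so that feeding \eqref{eq:polartoric} back through Piene's formula returns the $V_j$ by construction.

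The third step handles the two edge cases that make the bookkeeping delicate. First, one must confirm that the "lowest" coefficient reproduces $V_0 = \chi(\mathrm{Eu}_{X_A})$ rather than $\deg(X_A) = V_{d-1}$ or some mixture; here I would invoke \eqref{eq:MatsuiTakeuchiEulerObstructionZero}, which already identifies $\chi(\mathrm{Eu}_{X_A}) = V_0$, to anchor the induction. Second, one must check the powers of $h$: the dimension-$j$ Chern-Mather class of a variety of dimension $d-1$ in $\PP^{n-1}$ has codimension $n-1-j$, hence is a multiple of $h^{n-1-j}$, and the claim is exactly that this multiple is $V_j$; the range $0 \le j \le d-1$ matches the range of faces of $P$, and for $j \ge d$ the coefficient must vanish, which follows because the polar degrees $\delta_i(X_A)$ vanish for $i \ge d$ (the variety has dimension $d-1$). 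The final sentence of the proposition — that $V_j = \deg(c_M(X_A)_{\dim j})$ — is then just a restatement of the coefficient identification.

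I expect the main obstacle to be purely notational: reconciling the indexing and sign conventions in Piene's formula \cite{piene1988cycles} (which is stated for a general projective variety in terms of its polar classes) with the conventions used for $\delta_i(X_A)$ in Theorem~\ref{thm:zwei} and in \eqref{eq:holmeformula}. In particular one must be careful about whether $\delta_0$ denotes the degree of $X_A$ or the generic ED-relevant polar degree, and about the off-by-one shifts between $\dim(\beta) = j$ and the Chern class index $c_{d-j}$. Once the conventions are aligned, the algebraic verification is the standard fact that \eqref{eq:holmeformula} is invertible over $\ZZ$, so no genuinely new computation is needed; the proof is essentially a transcription plus a remark that the two formulas are inverse to one another. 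A clean way to present it is: write down the matrix of \eqref{eq:holmeformula} (unipotent upper- or lower-triangular in the $\binom{j}{i+1}$ entries), observe Theorem~\ref{thm:zwei} gives its inverse applied to the vector $(V_0,\dots,V_{d-1})$, hence Piene's formula — which is \eqref{eq:holmeformula} read as "degrees of Chern-Mather classes in terms of polar degrees" — returns $(V_0,\dots,V_{d-1})$, and assemble these as the coefficients of $h^{n-1-j}$ in $c_M(X_A)$.
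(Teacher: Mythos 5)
Your proposal is correct and takes essentially the same route as the paper: combine Theorem \ref{thm:zwei} with Piene's singular-case version of \eqref{eq:holmeformula} (Chern classes replaced by Chern--Mather classes) and conclude that the degrees of the Chern--Mather class components coincide with the $V_j$. Your invertibility framing --- the triangular matrix with entries $(-1)^{d-j}\binom{j}{i+1}$ has diagonal entries $\pm 1$, so $\deg\bigl(c_{d-j}^{M}(X_A)\bigr)=V_{j-1}$ follows at once, making the exact shape of your first displayed ``inverse'' formula immaterial --- is a clean substitute for the paper's explicit simplification of the summations via Aluffi's Chern--Mather involution formulas.
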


\begin{proof} In light of Theorem \ref{thm:zwei}, this follows immediately from
Piene's formula  \cite[Theorem 3]{piene1988cycles} for the Chern-Mather class 
of a projective variety in terms of polar degrees. The
  simplification of the summations required to 
  arrive at the formula (\ref{eq:CMToric}) is aided  considerably by
    employing the Chern-Mather involution formulas of Aluffi \cite{AluffiCM}.
\end{proof}

The result of Proposition \ref{thm:CMClass} may also be expressed in the Chow ring of $X_A$ as \begin{equation}\label{eq:CM_In_Toric_CR}
c_M(X_A)\,\,\,\,=\,\sum_{\alpha \; {\rm a \;face \; of} \,P} {\rm Eu}(\alpha)\cdot [M_{\alpha}:\ZZ A_{\alpha}] [\overline{T_{\alpha}}]\,\,\, \in \,\,A^*(X_A),
\end{equation} where $[\overline{T_{\alpha}}]$ is the class in $A^*(X_A)$ of the orbit closure associated to a face $\alpha$ of $P$. This reformulation follows from Proposition \ref{thm:CMClass} and \eqref{eq:EulerObsturctionValueOnOribit}.
A direct proof  is given in \cite[Theorem~2]{Piene2016CM}.

Theorem~\ref{thm:eins} is now a special case of \cite[Proposition 2.9]{AluffiCM}. 
Aluffi's result  expresses the ED degree of an arbitrary projective variety
 in terms of the Chern-Mather class.
 While this does encompass our situation, 
it does not provide new tools for actually  computing polar degrees, 
Chern-Mather classes, or ED degrees.
 Our contribution fills this gap in the toric case.
  We furnish an algorithm for computing these quantities for
 an arbitrary projective toric variety $X_A$, not necessarily normal.
Our method is implemented in the {\tt Macaulay2} package  at (\ref{eq:website}).
Its input is the $d \times n$-integer matrix $A$, and its output is the 
numbers in (\ref{eq:EDtoric}) and~(\ref{eq:polartoric}).

Our  implementation allows for relatively efficient and extremely scalable computation.
The running time is almost entirely determined by the facial structure of
$P = {\rm conv}(A)$. While this may make the computation difficult for
  high-dimensional polytopes with many faces, it has several important advantages over algebraic methods. First, the running time of our code
  has very little direct dependence on the degree of $X_A$.
  For algebraic methods   (both numerical and symbolic), this will be a bottleneck:
   computations  become infeasible as ${\rm degree}(X_A)$ grows. 
   Second, for fixed $d$ and large $n$, the toric ideal of $A$
   can become unmanageable quite rapidly, while an iteration over the faces
   of $P$ is still feasible. Third,
      our combinatorial method is exact, and many portions of the computation 
      could be parallelized.

We close this section by summarizing the steps of our algorithm.
The input is the matrix $A$. It computes
 the CM volume for each face of $P = {\rm conv}(A)$.
 The output is the list of CM volumes
 $V_0,\ldots,V_{d-1}$,  the polar degrees
  $\delta_0(X_A), \ldots, \delta_{d-1}(X_A)$, and the ED degree of $X_A$.
      \begin{itemize}
\item Compute the face poset $\mathcal{P}$ of the lattice polytope $P = {\rm conv}(A)$.
\item Build a second poset $ \overline{\mathcal{P}}$, isomorphic to $\mathcal{P}$,
whose elements are the pairs $(\alpha, A_{\alpha})$ for~$\alpha\in \mathcal{P}$
\item For each chain $(P, A)\supset (\alpha_1, A_{\alpha_1}) \supset \cdots \supset (\alpha_{\ell}, A_{\alpha_{\ell}}) $ in the poset $ \overline{\mathcal{P}}$, do the following:
\begin{itemize}
\item Reorder the columns of the matrix $A$ according to this chain. The new matrix is
 $$
\tilde{A}\,\,=
\,\, \bigl(\,A_{\alpha_{\ell}}, \,A_{\alpha_{\ell-1} \backslash \alpha_{\ell}},
\, A_{\alpha_{\ell-2} \backslash \alpha_{\ell-1}}, \,
\ldots, \,A_{\alpha_{1}\backslash \alpha_{2}}, \,A_{ P \backslash \alpha_{1}}
\,\bigr).
$$
\item Find the Hermite normal form $A'$ of $\tilde{A}$, as in Remark \ref{rmk:adapted}.
\item For all pairs $1 \leq i < j \leq \ell$,
 compute the relative subdiagram volumes $\mu(\alpha_i \backslash \alpha_j)$,
  using \eqref{eq:SDV2} by selecting the appropriate submatrix $C$ of $A'$.
\end{itemize} 
\item Compute the normalized volumes of all elements in the face poset $\mathcal{P}$. 
\item Combining all subdiagram volumes and face volumes found above,
we now compute the Euler obstruction for each face of $P$ using the formula in Definition \ref{def:Eu}.
\item Compute $V_j$ using formula (\ref{eq:Vjay}).
Compute $\delta_i(X_A)$ using 
(\ref{eq:polartoric}). Output ${\rm gEDdegree}(X_A)$.
\end{itemize}

\section{Dimension Two and Codimension One}
\label{sec:four}

In this section we compute the gED degree for
instances of low dimension  and low codimension.
We start with    toric surfaces.
 Here $d{=}3$ and we assume that the matrix has the~form
 $$ A \quad = \quad \begin{pmatrix}
\alpha_1 & \alpha_2 & \alpha_3 & \cdots & \alpha_{n-1} & \alpha_n \\
\beta_1 & \beta_2 & \beta_3 & \cdots & \beta_{n-1} & \beta_n \\
1 & 1 & 1 & \cdots  & 1 & 1 
\end{pmatrix}. $$
The lattice polygon  $P = {\rm conv}(A)$
has normalized area $V_2 = {\rm Vol}(P)$.
Its polar degrees  are
\begin{equation}
\label{eq:polardim2}
\delta_0 \, =\, 3 V_2  - 2 V_1 + V_0  \,, \,\,\,
\delta_1\, = \,3 V_2 - V_1 \,\,\, \hbox{and} \,\,\,
\delta_2\, =\,  V_2. 
\end{equation}
The generic ED degree
is equal to the sum of the polar degrees:
\begin{equation}
\label{eq:EDdim2} {\rm gEDdegree}(X_A) \,\,\, = \,\,\,
\delta_0 + \delta_1  + \delta_2 \,\,\,=\,\,\,
\,\,\, 7 V_2 \,- \,3 V_1 \,+\,V_0. 
\end{equation}

If $X_A$ is smooth then $V_0$ and $V_1$ are positive integers. Namely,
$V_0$ is the number of vertices of $P$, and
  $V_1$ is number of all lattice points in the boundary of $P$.
 Here is a simple example.

\begin{example} \label{ex:P1P1}  \rm
Let $n=9$ and $X_A = \PP^1 \times \PP^1$, embedded in
$\PP^8$ with the line bundle $\mathcal{O}(2,2)$:
 $$ A \quad = \quad \begin{pmatrix}
 0 & 0 & 0 &  1 & 1 & 1 & 2 & 2 & 2 \\
  0 & 1 & 2 & 0 & 1 & 2 & 0 & 1 & 2 \\
 1 & 1 & 1 &  1 & 1 & 1 &  1 & 1 & 1 
 \end{pmatrix}
 $$
This corresponds to   approximating a data vector $u \in \RR^9$ by
  biquadratic monomials.
  Then $P  = {\rm conv}(A)$ is a  square of side length $2$.
The face volumes are $V_2 = 8$, $V_1 = 8$ and $V_0 = 4$,
and hence ${\rm gEDdegree}(X_A) = 36$. 
For instance, if the weights are
$\lambda = (4, 1, 9, 2, 3, 1, 7, 6, 5)$ and
data point is $u=(29, 14, 46, 13, -5, 42, 42, 5, 23)$
then precisely $14$ of the $36$ complex critical points are real.
This choice of $\lambda $ exhibits the generic behavior.
The ED degree drops from $36$ to $20$ if we take
$\lambda = (1,1,1,1,1,1,1,1,1)$; here the unit weights are not generic.
This degree drop is explained by the criterion we shall derive in 
 Proposition \ref{ref:eqholds}.
\hfill $\diamondsuit$
\end{example}

For singular toric surfaces $X_A$, we must consider the
CM volumes of the edges and vertices of the planar 
configuration $A$. If $X_A$ is normal  then the following
formula can be used:

\begin{corollary} \label{prop:surfaces}
Suppose that $X_A$ is a toric surface with isolated singularities in $\PP^n$.
Then $V_1$ is the number of lattice points in the boundary of
$P = {\rm conv}(A)$, and the CM volume
of a vertex $a_i$ of $A$ equals $\,
{\rm Vol}({\rm conv}(A \backslash \{a_i\}))+2- {\rm Vol}(P)$,
where ${\rm Vol}$ denotes normalized area.
Hence $V_0$ is the sum of these (possibly negative) integers, as
$a_i$ ranges over all vertices of $P$.
\end{corollary}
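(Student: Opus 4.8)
\emph{Setup and plan.} The plan is to unwind Definitions~\ref{def:subdiagram}--\ref{def:Vj} in the surface case $d=3$, where the only faces of $P$ are $P$ itself, the edges, and the vertices. The operative hypothesis is that $X_A$ is normal: a normal toric surface is smooth in codimension one, so its singular locus consists of finitely many torus fixed points, i.e.\ of (some of the) vertices of $P$ --- which is the ``isolated singularities'' situation --- and I will take $X_A$ to be normal throughout.

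\emph{Two reductions and the claim about $V_1$.} Since $X_A$ is normal it is the toric variety of the normal fan of $P$, and the orbit $T_e$ attached to an edge $e$ corresponds to a ray of that fan; rays are smooth cones, so $X_A$ is smooth along $T_e$ and hence ${\rm Eu}(e)=1$ by Definition~\ref{def:Eu}; also ${\rm Eu}(P)=1$. Therefore, by \eqref{eq:Vjay}, $V_1=\sum_{e}{\rm Vol}(e)\,{\rm Eu}(e)=\sum_{e}{\rm Vol}(e)$, the sum over the edges of $P$, where ${\rm Vol}(e)$ is the normalized length of $e$, i.e.\ one less than the number of its lattice points. Summing over the edges of the polygon $P$ counts each vertex once and each relative-interior lattice point of an edge once, which is precisely the number of lattice points on $\partial P$. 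This proves the first assertion.

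\emph{The CM volume of a vertex.} Fix a vertex $a_i$ of $P$ and translate so that $a_i=0$. The faces of $P$ strictly containing $\{a_i\}$ are exactly the two edges $e_1,e_2$ through $a_i$ together with $P$; in the recursion of Definition~\ref{def:Eu} the edges enter with sign $(-1)^{1-0-1}=+1$ and $P$ with sign $(-1)^{2-0-1}=-1$, and all three Euler obstructions on the right are $1$ by the above, so ${\rm Eu}(\{a_i\})=\mu(e_1/\{a_i\})+\mu(e_2/\{a_i\})-\mu(P/\{a_i\})$. By \eqref{eq:SDV}, $\mu(P/\{a_i\})={\rm Vol}(P)-{\rm Vol}({\rm conv}(A\setminus\{a_i\}))$, since ${\rm conv}(A_P/\{a_i\})$ is a translate of $P$ and deleting the origin yields a translate of ${\rm conv}(A\setminus\{a_i\})$. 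For an edge $e=e_j$, writing the columns of $A$ on $e$ at lattice positions $0=b_0<b_1<\cdots<b_s$, \eqref{eq:SDV2} gives $\mu(e/\{a_i\})=b_s-(b_s-b_1)=b_1$. The crucial input is that normality forces $b_1=1$: the affine chart ${\rm Spec}\,\CC[\mathbb{N}(A-a_i)]$ of $X_A$ at $a_i$ is normal, so $\mathbb{N}(A-a_i)$ is a saturated affine semigroup, hence it contains the primitive generator of each extreme ray of its cone; that generator is one of the $a_j-a_i$, and being the shortest such along $e$ it forces $b_1=1$. Thus ${\rm Eu}(\{a_i\})=2-{\rm Vol}(P)+{\rm Vol}({\rm conv}(A\setminus\{a_i\}))$; multiplying by ${\rm Vol}(\{a_i\})=1$ gives the stated formula for the CM volume of $a_i$, and summing over all vertices of $P$ (using \eqref{eq:Vjay}) gives the formula for $V_0$.

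\emph{Main obstacle.} The step needing the most care is establishing $b_1=1$, which is where normality genuinely enters; one must also be attentive to the lattice against which the positions $b_j$ are measured --- the lattice generated by the columns of $A$ along $e$, as produced by the Hermite-normal-form reduction of Remark~\ref{rmk:adapted} --- since it is with respect to that lattice that the saturation argument yields $b_1=1$. Everything else is routine bookkeeping with the definitions, together with the fact that one-dimensional cones are smooth.
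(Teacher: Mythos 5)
Your argument is correct and is essentially the paper's: the proof of Corollary~\ref{prop:surfaces} given in the paper is just an appeal to the general machinery of Section~\ref{sec:three} (Definitions~\ref{def:subdiagram}--\ref{def:Vj} and the recursion), which you unwind for $d=3$ exactly as intended, with the right signs, ${\rm Eu}(e)=1$ along edges, and $\mu(P/\{a_i\})={\rm Vol}(P)-{\rm Vol}({\rm conv}(A\setminus\{a_i\}))$. The only imprecision is in your closing caveat: the Hermite-normal-form recipe of Remark~\ref{rmk:adapted} measures the edge positions $b_j$ in the saturated quotient lattice $M_e/M_{\{a_i\}}$, not in the lattice generated by the columns of $A$ on $e$, whereas your normality argument yields primitivity with respect to $\ZZ(A-a_i)$; these normalizations agree (so $b_1=1$ and the volume identification for $\mu(P/\{a_i\})$ go through) under the convention, implicit throughout the paper, that the columns of $A$ affinely generate the ambient lattice.
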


\begin{proof}
This follows from the general results in Section~\ref{sec:three}.
See also \cite[Proposition 1.11.7]{Nod}.
\end{proof}

The following example  illustrates Corollary~\ref{prop:surfaces}.
For a non-normal case see Example~\ref{ex:quadrangle}.
For any such small instance $A$,
we can always verify our combinatorial computation of toric ED degrees
using the general algebraic method  in \cite[(5.3)]{DHOST}.
This is done by first computing the bigraded prime ideal of the
{\em conormal variety} ${\rm Con}(X_A)$.
Recall that ${\rm Con}(X_A)$ is an irreducible closed subvariety of
dimension $n-2$ in $\PP^{n-1} \times \PP^{n-1}$. It is the closure of
the set of pairs $(x,y)$ in  $\PP^{n-1} \times \PP^{n-1}$ 
such that $x$ is a smooth point in $X_A$ and $y$ is a hyperplane tangent to $X_A$ at $x$.
The projection of ${\rm Con}(X_A)$ onto the second factor
is the {\em $A$-discriminant} $X_A^\vee$.

\begin{example} \label{ex:surfaceinP5}
\rm
Let $n=6$ and let $X_A$ be the normal toric surface in $\PP^5$ given by
 $$ A \quad = \quad \begin{pmatrix}
 1 & 0 & 1 & 2 & 3 & 1  \\
 0 & 1 & 1 & 1 & 1 & 2  \\
 1 & 1 & 1 & 1 & 1 & 1
 \end{pmatrix}.
 $$ 
 This is the closure of the image of
    $(\CC^*)^3\to \PP^5 , \,(s,t,u) \mapsto (su:tu: stu: s^2tu:s^3tu:st^2u  )$.
   
\begin{figure}[h!]
 \centering
    \begin{tikzpicture}[scale=1.7]
\draw [purple,dashed](0,1) -- (3,1);
\draw [purple,dashed](1,2) -- (1,0);
\draw [purple,dashed](1,2) -- (2,1);
\draw [purple,dashed](1,0) -- (2,1);
\draw [blue,very thick](0,1) -- (1,0);
\draw [blue,very thick](0,1) -- (1,2);
\draw [blue,very thick](1,0) -- (3,1);
\draw [blue,very thick](1,2) -- (3,1);

\node at (1,-.2) {$a_1$};
\node at (-0.15,1.15) {$a_2$};
\node at (1.15,1.15) {$a_3$};
\node at (2.15,1.15) {$a_4$};
\node at (3.15,1.15) {$a_5$};
\node at (1,2.2) {$a_6$};
\fill[teal] (0,1) circle[radius=2pt];
\fill[teal] (3,1) circle[radius=2pt];
\fill[teal] (1,2) circle[radius=2pt];
\fill[teal] (1,0) circle[radius=2pt];
\fill[teal] (1,1) circle[radius=2pt];
\fill[teal] (2,1) circle[radius=2pt];
\end{tikzpicture}
     \caption{The polygon $P  = {\rm conv}(A)$ has normalized
     area six. The only lattice points in its boundary are
     the four vertices.      Their CM volumes can be read off
     from this triangulation. \label{fig:quadrangle}
     }
\end{figure}
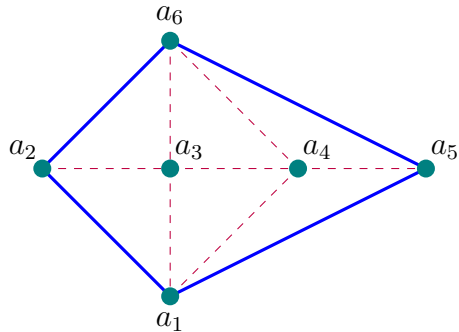
 Figure \ref{fig:quadrangle}  shows that $V_2 = 6$ and $V_1 = 4$.
The four vertices of the polygon $P$ are $a_1,a_2,a_5,a_6$, and the corresponding
complementary areas ${ \rm Vol}({\rm conv}(A \backslash \{a_i\}))$ are $3,4,4,3$.
Hence the CM volumes of the vertices are $-1,0,0,-1$,
for total of $V_0 = -2$. We conclude 
$$ {\rm gEDdegree}(X_A) \,\, = \,\, 7 V_2 - 3 V_1 + V_0 \,\,=\,\,
7\cdot 6 - 3 \cdot 4 + (-2) \,=\, 28. $$

We verify this by computing the conormal variety ${\rm Con}(X_A) \subset \PP^5 \times \PP^5$.
Each point $y \in X_A^\vee$ represents a singular curve
$\{y_1 su + y_2 tu + y_3 stu +  y_4 s^2tu + y_5 s^3tu + y_6 st^2u = 0\}$
on the toric surface $X_A \subset \PP^5$, and $x = (su:tu:\cdots:st^2u)$ is the singular point.
The {conormal variety} has dimension~$4$. Its
prime ideal $\mathcal{C}$ is minimally generated by $17$ polynomials in
the $6+6$ homogeneous coordinates of $\PP^5 \times \PP^5$. Among these are 
four binomial quadrics that generate the toric ideal of $X_A$.
The polar degrees are the coefficients of the {\em multidegree} of the ideal $\mathcal{C}$,
and they are  $\delta_0 = 8$, $\delta_1 = 14$, and $\delta_2 = 6$.
This is consistent with Theorem  \ref{thm:zwei}, which says that
$\delta_0 = 3 V_2  - 2 V_1 + V_0 $, 
$\,\delta_1=3 V_2  - V_1 $ and
$\delta_2=V_2$.
The $A$-discriminant $X_A^\vee$ is a hypersurface
of degree $8$. Its defining polynomial
is found among our $17$ ideal generators.

The following code in {\tt Macaulay2} \cite{M2} realizes
what is described in the previous paragraph.
\begin{verbatim}
R = QQ[s,t,u,x1,x2,x3,x4,x5,x6,y1,y2,y3,y4,y5,y6,Degrees=>{{1,1},{1,1},{1,1},
  {1,0},{1,0},{1,0},{1,0},{1,0},{1,0}, {0,1},{0,1},{0,1},{0,1},{0,1},{0,1}}];
f =  y1*s*u + y2*t*u + y3*s*t*u + y4*s^2*t*u + y5*s^3*t*u + y6*s*t^2*u;
I = ideal(diff(s,f),diff(t,f),diff(u,f),
     x1-s*u,  x2-t*u,  x3-s*t*u,  x4-s^2*t*u,  x5-s^3*t*u,  x6-s*t^2*u);
C = eliminate({s,t,u},I);
C = saturate(C,ideal(x1*x2*x3*x4*x5*x6));
C = saturate(C,ideal(y1*y2*y3*y4*y5*y6));
apply(first entries mingens(C),t->degree(t))
multidegree C
\end{verbatim}
The output of the last line is the binary form whose coefficients are the polar degrees.
\hfill $\diamondsuit$
\end{example}

We next examine toric hypersurfaces.
Let $X_A \subset \PP^{n-1}$ be  defined by one binomial equation
\begin{equation}
\label{eq:onebinomial}
  x_1^{c_1} \cdots x_r^{c_r} \,=\,
x_{r+1}^{c_{r+1}} \cdots x_n^{c_n} .
\end{equation}
Here $c_1,\ldots,c_n$ are positive integers that are relatively prime, and  they satisfy
\begin{equation}
\label{eq:degofhyper}
  c_1 + \cdots + c_r \, = \, c_{r+1} + \cdots + c_n \,\, = \,\, {\rm deg}(X_A) . 
  \end{equation}
Our goal is to express the gED degree and the polar degrees 
of $X_A$  in terms of $c_1,c_2,\ldots,c_n$.

The integer matrix $A$ has format $(n-1) \times n$,
and its kernel is spanned by the column vector
$(c_1,\ldots,c_r,-c_{r+1},\ldots,-c_n)^T$.
The associated lattice polytope $P = {\rm conv}(A)$ has dimension $n-2$,
and it has $n$ vertices provided $2 \leq r \leq n-2$.
We consider the {\em Cayley polytope} of $P$ and its mirror image $-P$.
This is the $(n-1)$-dimensional polytope
obtained by placing $P$ and $-P$ into parallel hyperplanes
and taking the convex hull.  See e.g.~\cite[Definition 4.6.1]{MS}.
The integer matrix representing the Cayley polytope has format
$n \times 2n$. It equals
$$ {\rm Cay}(A,-A) \quad = \quad
\begin{pmatrix} {\bf 1} &\phantom{-} {\bf 0} \\
A & -A \\ \end{pmatrix}, $$
where ${\bf 1} = (1,1,\ldots,1)$ and ${\bf 0} = (0,0,\ldots,0)$ in $\RR^{n}$.
We shall first derive the following result.

\begin{theorem}
\label{thm:codim1}
The conormal variety ${\rm Con}(X_A)$ is a
 toric variety of dimension $n-2$ in  
 $\PP^{n-1} \times \PP^{n-1}$. It corresponds to the toric variety of ${\rm Cay}(A,-A)$.
The generic ED degree of~$X_A$ is the normalized volume of
the Cayley polytope. The polar degrees $\delta_i = \delta_i(X_A)$ are given~by
\begin{equation}
\label{eq:volumepoly}
{\rm Vol} \bigl(\lambda P + \mu (-P) \bigr) \,\, = \,\,
\sum_{i=0}^{n-2} \delta_i \binom{n-2}{i} \lambda^i \mu^{n-2-i}, \;\;\; {\rm where}\; \lambda, \mu \in \RR_{>0}.
\end{equation}
\end{theorem}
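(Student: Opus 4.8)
The plan is to write down $\mathrm{Con}(X_A)$ explicitly through the Gauss map of the hypersurface, recognize the resulting torus orbit closure as the one attached to $\mathrm{Cay}(A,-A)$, and then read off the polar degrees and the gED degree from the standard identification of multidegrees of toric subvarieties of a product of projective spaces with mixed volumes.

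First I would use that $X_A$ is the irreducible hypersurface $V(f)$ cut out by the homogeneous binomial $f = x^{c^+}-x^{c^-}$, where $x^{c^+}=x_1^{c_1}\cdots x_r^{c_r}$ and $x^{c^-}=x_{r+1}^{c_{r+1}}\cdots x_n^{c_n}$ (irreducibility of $f$ follows from $\gcd(c_1,\dots,c_n)=1$). For a hypersurface, the conormal variety is the closure in $\PP^{n-1}\times\PP^{n-1}$ of the graph of the projectivized gradient, $\{(x,[\nabla f(x)]):x\in X_A,\ \nabla f(x)\neq 0\}$. On the dense torus orbit of $X_A$ one has $x_i=t^{a_i}$ (an overall rescaling of $x$ being immaterial projectively), and the defining relation $x^{c^+}=x^{c^-}$, which holds because $(c_1,\dots,c_r,-c_{r+1},\dots,-c_n)$ spans $\ker A$, turns the partials into the monomials $\partial f/\partial x_i=\pm c_i\,x^{c^+}/x_i$. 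Hence on the torus $[\nabla f(x)]=(c_1t^{-a_1}:\cdots:c_rt^{-a_r}:-c_{r+1}t^{-a_{r+1}}:\cdots:-c_nt^{-a_n})$. The nonzero scalars $\pm c_i$ constitute a fixed diagonal automorphism of the second $\PP^{n-1}$, which changes neither the isomorphism type nor the multidegree, so after absorbing them $\mathrm{Con}(X_A)$ is the closure of the image of the torus map $(\CC^*)^{n-1}\to\PP^{n-1}\times\PP^{n-1}$, $t\mapsto\bigl((t^{a_i})_i,(t^{-a_i})_i\bigr)$. A dimension count finishes this step: the projection to the first factor is already generically finite onto the $(n-2)$-dimensional $X_A$, so the parametrized set is dense in the irreducible $(n-2)$-dimensional variety $\mathrm{Con}(X_A)$, which therefore equals this torus orbit closure.

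Next I would match this orbit closure with the Cayley construction. By the general dictionary for toric subvarieties of a product of projective spaces, a torus orbit closure in $\PP^{n-1}\times\PP^{n-1}$ determined by two exponent configurations is the projective toric variety whose bihomogeneous structure is recorded by their Cayley configuration; stacking the blocks $A$ and $-A$ with homogenizing rows $\mathbf{1}$ and $\mathbf{0}$ produces exactly $\mathrm{Cay}(A,-A)=\left(\begin{smallmatrix}\mathbf{1} & \mathbf{0}\\ A & -A\end{smallmatrix}\right)$, whose torus parametrization $(s,t)\mapsto\bigl((st^{a_i})_i,(t^{-a_j})_j\bigr)$ records precisely the independent scalings of the two factors. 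This yields the first sentence of the theorem. Here I expect the point requiring the most care: the correspondence must be set up with the lattice generated by the columns of $\mathrm{Cay}(A,-A)$, not the ambient $\ZZ^n$, since $X_A$ — and hence $\mathrm{Con}(X_A)$ — is in general not normal, and this lattice must then be reconciled with the paper's normalized-volume conventions (the lattices $M_\alpha$ and the indices $[M_\alpha:\ZZ A_\alpha]$ in Definition \ref{def:Vj} and (\ref{eq:EulerObsturctionValueOnOribit})); getting this bookkeeping consistent is what makes the volume statements come out with the stated normalizations.

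Finally, for the polar degrees I would invoke Kleiman's description, recalled in the paper, of $\delta_i(X_A)$ as the coefficients of the class of $\mathrm{Con}(X_A)$ in $A^*(\PP^{n-1}\times\PP^{n-1})$, together with the standard computation of the multidegree of a toric variety via mixed volumes: for the orbit closure above, whose two configurations have convex hulls $P$ and $-P$ and which has dimension $n-2$, these coefficients are the mixed volumes of $P$ and $-P$, i.e. the coefficients of $\lambda^i\mu^{n-2-i}$, up to the factors $\binom{n-2}{i}$, in $\mathrm{Vol}(\lambda P+\mu(-P))$; this is (\ref{eq:volumepoly}). (Sanity check: the $\lambda^{n-2}$ coefficient is $\mathrm{Vol}(P)=\deg X_A=\delta_{n-2}$, as it should be since $X_A$ is a hypersurface.) For the gED degree, $\mathrm{gEDdegree}(X_A)=\sum_i\delta_i(X_A)$ by (\ref{eq:sumofpolar}), and the Cayley trick — the slicing identity $\mathrm{Vol}\bigl(\mathrm{Cay}(Q_1,Q_2)\bigr)=(n-1)!\int_0^1\mathrm{vol}_{n-2}\bigl((1-\tau)Q_1+\tau Q_2\bigr)\,d\tau$ applied with $Q_1=P$ and $Q_2=-P$ — evaluates $\sum_i\delta_i$ as the normalized volume of $\mathrm{Cay}(P,-P)$, giving the middle assertion.
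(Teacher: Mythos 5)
Your argument is correct, and its overall architecture coincides with the paper's: first show that ${\rm Con}(X_A)$ is the toric variety attached to ${\rm Cay}(A,-A)$, then read off the polar degrees and the gED degree from the mixed-volume/Cayley-polytope dictionary, using (\ref{eq:sumofpolar}) for the last claim. The difference lies in how you make the identification with ${\rm Cay}(A,-A)$. The paper works with equations: it forms $I_A+\mathcal{J}$, where $\mathcal{J}$ is generated by the $2\times 2$ minors of the matrix with rows $J(X_A)$ and $y$, saturates with respect to $x_1\cdots x_n$ (legitimate since the singular locus of $X_A$ lies in $\{x_1\cdots x_n=0\}$), and extracts the generators $c_i x_j y_j - c_j x_i y_i$, whose exponent vectors together with (\ref{eq:onebinomial}) give exactly the matrix (\ref{eq:gale}), the Gale dual of ${\rm Cay}(A,-A)$. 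You instead parametrize ${\rm Con}(X_A)$ over the dense torus via the Gauss map, absorb the constants $\pm c_i$ into a diagonal automorphism of the second factor, and finish with a dimension count; this parametric route avoids the saturation step, and your observation that the rescaling changes neither the isomorphism type nor the multidegree is precisely what is needed (the paper's ``corresponds to'' carries the same implicit rescaling, visible in the substitution $y_i \mapsto c_i y_i$ in Example \ref{ex:quadrangle}). What the paper's ideal-theoretic derivation buys is the explicit matrix (\ref{eq:gale}), which is reused in the proof of Corollary \ref{cor:PL}; what your version buys is a more self-contained final step, since you spell out the Kleiman multidegree interpretation and the Cayley slicing integral that turns $\sum_i\delta_i$ into the normalized volume of the Cayley polytope, where the paper simply cites \cite[Proposition 4.6]{MS}. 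Your flag about lattice normalization in the non-normal case is apt and is handled at the same (informal) level as in the paper.
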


The volume in (\ref{eq:volumepoly}) is
 the normalized lattice volume. Hence
$\delta_0 = \delta_{n-2} = {\rm Vol}(P)$ is the integer in
(\ref{eq:degofhyper}).
The formula (\ref{eq:volumepoly}) confirms the known fact that the 
polar degrees of a toric hypersurface are symmetric, i.e.~$\delta_{i-1} = \delta_{n-1-i}$ for all $i$.
 This symmetry of the polar degrees holds for any self-dual projective variety.
 This is known by results of
     Kleiman \cite{Kleiman1}; see also \cite{AluffiCM}.
Before we give the proof of Theorem \ref{thm:codim1}, let us present one
corollary and one example.

\begin{corollary} \label{cor:PL}
The polar degrees of $X_A$
are piecewise linear functions of $c_1,\ldots,c_n$.
Their regions of linearity are the cones in
the arrangement of hyperplanes
given by equating a subsum of $\{c_1,\ldots,c_r\}$
with a subsum of $\{c_{r+1},\ldots,c_n\}$,
 inside the $(n{-}1)$-space given by~(\ref{eq:degofhyper}).
\end{corollary}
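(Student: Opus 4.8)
The plan is to read off the piecewise-linear structure directly from the volume formula in Theorem~\ref{thm:codim1}. By that theorem, the polar degree $\delta_i$ is (up to the fixed binomial factor $\binom{n-2}{i}$) the coefficient of $\lambda^i \mu^{n-2-i}$ in the mixed-volume polynomial ${\rm Vol}(\lambda P + \mu(-P))$, and $P = {\rm conv}(A)$ depends on $c = (c_1,\ldots,c_n)$ only through the single linear relation $\ker(A) = \RR\cdot(c_1,\ldots,c_r,-c_{r+1},\ldots,-c_n)^T$. So the first step is to make the dependence of $P$ on $c$ explicit: choosing a fixed lattice basis for $\ZZ^{n-1}$, the vertices $a_1,\ldots,a_n$ of $P$ can be taken to be a fixed point configuration, and the only data that varies with $c$ is the \emph{affine-linear functional} recording which lattice points lie in which face — equivalently, the subdivision combinatorics of the Cayley polytope ${\rm Cay}(A,-A)$.

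The second step is the key observation: mixed volumes (and hence the $\delta_i$) are polynomial functions of the vertex coordinates of the summands, but the combinatorial type of $P$ (its face lattice, hence the \emph{choice} of which polynomial formula in the $c_j$ applies) changes exactly when some lattice point crosses a facet of $P$ or $-P$, i.e.\ when a supporting hyperplane of a face passes through an extra column of $A$. Translating the incidence ``$a_k$ lies on the affine span of a face of ${\rm conv}(A)$'' through the relation $\ker(A) = \RR(c_1,\dots,-c_n)^T$ shows that such a degeneracy is governed by a linear equation in $c$ asserting that a partial sum $\sum_{j \in S} c_j$ (with $S \subset \{1,\dots,r\}$) equals a partial sum $\sum_{j \in T} c_j$ (with $T \subset \{r+1,\dots,n\}$): indeed, a face of $P$ corresponds to a split of the ground set, and the circuit relation forces the two ``sides'' of $c$ to balance across that split precisely on these hyperplanes. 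Within any open chamber of the resulting hyperplane arrangement (restricted to the hyperplane (\ref{eq:degofhyper})), the face lattice of $P$ is constant, so each face volume is the normalized volume of a polytope whose vertices are affine-linear in $c$; since a normalized volume in fixed dimension is a homogeneous polynomial in the vertex coordinates and here that polynomial is in fact linear in the one-parameter family $c$ (the polytope grows by ``scaling'' the circuit direction), each $V_j$, and hence each $\delta_i$ via (\ref{eq:polartoric}), is a linear function of $c$ on that chamber. Continuity of volume across the walls then upgrades ``linear on each chamber'' to ``globally piecewise linear'', completing the argument.

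The main obstacle I anticipate is the second step's claim that the volume is genuinely \emph{linear}, not merely polynomial, in $c$ on each chamber — this requires checking that the family $\{P(c)\}$ moves, combinatorially rigidly, by a rank-one Minkowski perturbation in the circuit direction, so that ${\rm Vol}(P(c))$ and every face volume is an affine function of $c$ rather than a higher-degree polynomial. This is clear for the top volume $\delta_0 = {\rm Vol}(P)$ (it equals the gcd-normalized entry $\sum c_j/{\rm something}$, visibly linear by (\ref{eq:degofhyper})), and the formula (\ref{eq:volumepoly}) for mixed volumes of $P$ and $-P$ makes it plausible in general, but the bookkeeping for the lower-dimensional faces — particularly handling the Euler obstructions ${\rm Eu}(\beta)$, which are \emph{constant} on each chamber since they depend only on the combinatorial type — needs to be done carefully to conclude that $V_j$ is piecewise linear and not just piecewise polynomial.
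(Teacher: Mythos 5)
There is a genuine gap, and you flag it yourself: the step asserting that the (mixed) volumes are \emph{linear} in $c$ on each chamber, and that the chamber walls are exactly the subsum hyperplanes, is never actually established. Moreover, your setup for it is flawed: you cannot take the vertices $a_1,\ldots,a_n$ of $P$ to be a fixed point configuration with only some ``affine-linear functional'' varying --- if the configuration were fixed, its volumes would not depend on $c$ at all. The dependence of $P={\rm conv}(A)$ on $c$ is precisely through the lattice point configuration $A$, whose unique circuit is $(c_1,\ldots,c_r,-c_{r+1},\ldots,-c_n)$; the quantities that are linear in $c$ are the maximal minors (simplex volumes) of $A$ and of ${\rm Cay}(A,-A)$, not the vertex coordinates. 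Your ``rank-one Minkowski perturbation in the circuit direction'' picture is a heuristic, not an argument, and the appeal to Euler obstructions and (\ref{eq:polartoric}) is a detour that does not close the gap (in the paper's logical order it would also be awkward, since the later explicit formula for $\delta_i$ in terms of the $c_j$ is itself proved \emph{using} this corollary to reduce to the coprime case).

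The paper closes exactly this gap with a Gale duality argument that you are missing. The kernel of ${\rm Cay}(A,-A)$ is the row span of the explicit $n\times 2n$ matrix (\ref{eq:gale}), and each of its $\binom{2n}{n}$ maximal minors is a difference of a subsum of $\{c_1,\ldots,c_r\}$ and a subsum of $\{c_{r+1},\ldots,c_n\}$, with all $2^n-1$ nonzero such linear forms occurring; up to sign these are also the maximal minors of ${\rm Cay}(A,-A)$ itself. The volume of the Cayley polytope (equivalently, by Theorem \ref{thm:codim1}, the generating polynomial of the polar degrees via ${\rm Vol}(\lambda P+\mu(-P))$) is a sum of those maximal minors selected by a triangulation, and which minors are selected and with what signs is governed by the oriented matroid of ${\rm Cay}(A,-A)$. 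That oriented matroid is constant on each cone of the arrangement of subsum hyperplanes inside the space (\ref{eq:degofhyper}), so on each such cone the polar degrees are honest linear forms in $c$. This single mechanism delivers both conclusions you needed --- linearity (not merely polynomiality) on chambers and the identification of the walls --- which your proposal only conjectures.
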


\begin{proof}
The kernel of the matrix ${\rm Cay}(A,-A)$ is the row span of the $n \times 2n$-matrix
\setcounter{MaxMatrixCols}{20}
\begin{equation}
\label{eq:gale}
\begin{small}
\begin{pmatrix}
c_1 & c_2 & c_3 & \cdots & c_r & -c_{r-1} & \cdots & -c_n & &
0 & 0 & 0 & \cdots & 0 & 0 & \cdots & 0 \\
1 & -1 & 0 &  \cdots & 0 & 0 & \cdots & 0 & &
1 & -1 & 0 & \cdots & 0 & 0 & \cdots & 0 \\
0 & 1 & -1 &  \cdots & 0 & 0 & \cdots & 0 & &
0 & 1 & -1 &  \cdots & 0 & 0 & \cdots & 0 \\
  &  & \ddots & \ddots &  &  &  &  & &
 &  & \ddots & \ddots &      \\
  0 & 0 &  \cdots & 1 & -1 & 0 & \cdots & 0 & 
 & 0 & 0 &  \cdots & 1 & -1 & 0 & \cdots & 0  \\
 & & & &  \ddots & \ddots   &  &  & &
 & & & &  \ddots & \ddots   &      \\
&  & & & &  \ddots & \ddots   &  &  & &
 & & & &  \ddots & \ddots   &    \\
 0 & 0 & 0 & & & & 1 & \! -1 & & 
 0 & 0 & 0 & & & & 1 & \! -1 
\end{pmatrix}.
\end{small}
\end{equation}
Each of the $\binom{2n}{n}$ maximal minors of this {\em Gale dual} matrix is the difference of a
subsum of $\{c_1,\ldots,c_r\}$ and a subsum of $\{c_{r+1},\ldots,c_n\}$.
All $2^n-1$ non-zero such linear forms arise. They define
 hyperplanes inside the $(n{-}1)$-space defined by~(\ref{eq:degofhyper}).
 We restrict this hyperplane arrangement to $\RR^{n}_{> 0}$.
 Up to sign, the maximal minors of the matrix (\ref{eq:gale}) are also the maximal minors
of ${\rm Cay}(A,-A)$.  Hence the oriented matroid of ${\rm Cay}(A,-A)$
is fixed when $(c_1,\ldots,c_n)$ ranges over any cone of our arrangement
in $\RR^{n}_{>0}$.
The volume of the Cayley polytope is a sum of certain maximal minors,
selected by the oriented matroid. This implies our claim.
\end{proof}

\begin{example} \rm
\label{ex:quadrangle}
Let $n=4$ and consider the toric surface
$X_A = \{x_1^{c_1} x_2^{c_2} =x_3^{c_3} x_4^{c_4}\}$
in $\PP^3$. Writing $y_1,y_2,y_3,y_4$ for the coordinates
of the dual $\PP^3$, the conormal variety ${\rm Con}(X_A)$ is the irreducible surface
in $\PP^3 \times \PP^3$  that is defined by
$x_1^{c_1} x_2^{c_2} =x_3^{c_3} x_4^{c_4}$ together with the constraint
\begin{equation}
\label{eq:rank1a}
{\rm rank}
\begin{pmatrix}
c_1 x_1^{c_1-1} x_2^{c_2} & 
c_2 x_1^{c_1}    x_2^{c_2-1} & 
c_3 x_3^{c_3-1} x_4^{c_4} & 
c_4 x_3^{c_3}    x_4^{c_4-1} \\ 
y_1 & y_2 & y_3 & y_4 
\end{pmatrix} 
\,\,\leq \, 1.
\end{equation}
This binomial ideal is not prime, but we must saturate 
with respect to $  x_1x_2x_3x_4$ in order to 
compute the prime ideal of ${\rm Con}(X_A)$.
Performing this saturation one obtains the $2 \times 2$-minors
of the following matrix which has the same row space as the matrix above:
\begin{equation}
\label{eq:rank1b}
{\rm rank} 
\begin{pmatrix}
c_1 &  c_2 &  c_3 & c_4 \\
x_1 y_1 & x_2 y_2 & x_3 y_3 &  x_4 y_4 
\end{pmatrix} 
\,\,\leq \, 1.
\end{equation}
After replacing each variable $y_i$ by $c_i y_i$, 
we obtain the binomials corresponding to the rows~of
the $4 \times 8$-matrix in (\ref{eq:gale}).
For instance, the second row of this matrix corresponds to the
binomial $c_1 x_2 y_2 - c_2 x_1 y_1$.
The Gale dual ${\rm Cay}(A,-A)$ of (\ref{eq:gale}) represents
the $3$-dimensional polytope obtained by
taking the quadrangle $P = {\rm conv}(A)$
and placing its mirror image $-P$ on a parallel plane in $3$-space.
The volume of that $3$-dimensional Cayley polytope equals
$$ {\rm gEDdegree}(X_A) \,\,=\,\, \delta_0 + \delta_1 + \delta_2 \,\,= \,\,
3 (c_1+c_2)  + {\rm max}(|c_1-c_2|, |c_3-c_4| ) . $$
Here, $\delta_0 = \delta_2 = c_1+c_2 = c_3 + c_4$,
and $\delta_1  =  \delta_0 + {\rm max}(|c_1-c_2|, |c_3-c_4| )$.
By (\ref{eq:volumepoly}), we find these formulas by measuring the
area of the planar polygon $\,\lambda P + \mu (-P)$.
\hfill $\diamondsuit$
\end{example}

\begin{proof}[Proof of Theorem \ref{thm:codim1}]
The map that attaches  tangent hyperplanes to
 smooth points of $X_A$ is a birational map from
 $X_A \subset \PP^{n-1}$ 
to  the conormal variety ${\rm Con}(X_A) \subset \PP^{n-1} \times \PP^{n-1}$.
It is equivariant with respect to the action of
the dense torus of $X_A$. Hence ${\rm Con}(X_A)$ is toric. We find its toric ideal using a procedure analogous to the transformation from  (\ref{eq:rank1a}) to (\ref{eq:rank1b}). Let $\mathcal{J}$ be the ideal given by the $2 \times 2$-minors of $\begin{pmatrix}
J(X_A) & y \end{pmatrix}^T $ where 
$y = (y_1,\ldots,y_n)$ and $J(X_A)$ is the gradient
vector of (\ref{eq:onebinomial}).
 This matrix is analogous to (\ref{eq:rank1a}). Let $I_A$ be the ideal of \eqref{eq:onebinomial}.
 
  The ideal defining ${\rm Con}(X_A)$ is $(I_A+\mathcal{J}): \langle J(X_A)\rangle^{\infty}$.
 This is a toric ideal. It can also be obtained by saturating  
the binomial ideal $ I_A+\mathcal{J}$ with respect to $x_1 \cdots x_n$
since the singular locus of $X_A$ lies in $\{x_1\cdots x_n = 0 \}$.
Among the generators of that toric ideal are the 
binomials $c_i x_j y_j - c_j x_i y_i$ as in (\ref{eq:rank1b}).
We take these for $j = i+1$ together with (\ref{eq:onebinomial})
and we write their exponents as the rows of the $n \times 2n$-matrix \eqref{eq:gale}.
   This matrix is the Gale dual of ${\rm Cay}(A,-A)$.
This proves the first two statements in Theorem \ref{thm:codim1}.
The  next conclusions about the ED degree and the polar degrees
of $X_A$ now follow from known results 
(cf.~\cite[Proposition 4.6]{MS})
about the relationship between
mixed volumes and triangulations of Cayley polytopes.
\end{proof}
Theorem~\ref{thm:codim1} identified the conormal
variety of a toric hypersurface as the toric variety
given by the Cayley polytope.
The ED degree is the volume of the Cayley polytope.
We now use
the general result in Theorem~\ref{thm:eins}
and~\ref{thm:zwei} to derive a formula for that volume.

\begin{theorem}
The $i^{th}$ polar degree of the toric hypersurface $X_A$ equals
\begin{equation}
\label{eq:codim1delta}
  \delta_i \,\,=\,\,\, \binom{n-1}{i+1} \cdot {\rm deg}(X_A)\, \,\,-\,
 \sum_{\tau\;:\;|\tau| = n-i-1} \!\!\!
{\rm min} \bigl(\,
\sum_{j \in \tau \cap \{1,\ldots,r\}} \!\!\!\!\!\! c_j \,\,\,,
\sum_{j \in \tau\cap \{r+1,\ldots,n\}}\!\!\!\!\!\!\! c_j \,\, \bigr).
\end{equation}
\end{theorem}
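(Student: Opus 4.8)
The plan is to extract the coefficient of $\binom{n-1}{i+1}\lambda^{i+1}\mu^{n-2-i}$ (equivalently, the polar degree $\delta_i$) directly from the volume polynomial in~\eqref{eq:volumepoly}, using a concrete triangulation of the Cayley polytope ${\rm Cay}(A,-A)$. By Theorem~\ref{thm:codim1} we already know that ${\rm Con}(X_A)$ is the toric variety of ${\rm Cay}(A,-A)$ and that the mixed-volume expansion~\eqref{eq:volumepoly} computes all the $\delta_i$. So the task is purely combinatorial: compute ${\rm Vol}(\lambda P + \mu(-P))$ as an explicit polynomial in $\lambda,\mu$. First I would recall that the mixed volumes appearing as coefficients in ${\rm Vol}(\lambda P + \mu(-P))$ are, via the standard Cayley trick (\cite[Proposition 4.6]{MS}), the normalized volumes of the pieces of a mixed subdivision of $P$ with respect to $-P$; equivalently they are read off from a triangulation of ${\rm Cay}(A,-A)$ induced by a generic lifting.

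The key step is to identify that triangulation. Since $P={\rm conv}(A)$ is the polytope whose vertices are the columns of $A$ and whose unique (up to scaling) affine dependence among the $a_i$ is $(c_1,\dots,c_r,-c_{r+1},\dots,-c_n)$, the polytope $P$ is a \emph{circuit}: it has exactly $n$ vertices in dimension $n-2$, so its triangulations are governed by the two ways of resolving this single circuit. The Cayley polytope ${\rm Cay}(A,-A)$ has the Gale dual~\eqref{eq:gale}, whose maximal minors are, up to sign, the signed subsum differences $\sum_{j\in\tau\cap\{1,\dots,r\}}c_j - \sum_{j\in\tau\cap\{r+1,\dots,n\}}c_j$. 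A placing (pushing) triangulation of ${\rm Cay}(A,-A)$ then has simplices indexed by those subsets $\tau$ on which the appropriate minor is nonzero, and the normalized volume of the simplex indexed by $\tau$ is the absolute value $\bigl|\sum_{j\in\tau\cap\{1,\dots,r\}}c_j - \sum_{j\in\tau\cap\{r+1,\dots,n\}}c_j\bigr|$. Grading by how many of the lifted points come from the $P$-copy versus the $-P$-copy (this is exactly the $\lambda/\mu$ bidegree in the Cayley construction), the simplices with $i+1$ vertices from $P$ contribute to $\delta_i\binom{n-1}{i+1}$, and summing the signed subsum differences over all such $\tau$ with $|\tau|=n-1-i$ of a fixed type telescopes: the positive part contributes $\binom{n-1}{i+1}\cdot\bigl(\sum_{j\le r}c_j\bigr)$-type terms, while the ``$\min$'' appears because $|x| = x + y - 2\min(x,y)$ when $x+y$ is fixed (here $x+y = {\rm deg}(X_A)$ by~\eqref{eq:degofhyper}). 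Carefully bookkeeping the $\binom{n-1}{i+1}$ factor and the fixed-sum identity $|a-b| = (a+b) - 2\min(a,b)$ converts $\sum_\tau \bigl|\cdots\bigr|$ into $\binom{n-1}{i+1}{\rm deg}(X_A) - \sum_\tau \min(\cdots)$ modulo the correct constant, yielding~\eqref{eq:codim1delta}.

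Alternatively, and perhaps more cleanly, one can bypass triangulations entirely: plug the subdiagram-volume description of the Euler obstructions (Definition~\ref{def:Eu}) for the circuit polytope $P$ into the formula~\eqref{eq:polartoric} of Theorem~\ref{thm:zwei}. Because $P$ is a circuit, each proper face $\beta$ corresponds to a subset $S$ of the columns, the CM volume ${\rm Vol}(\beta){\rm Eu}(\beta)$ is governed by the restricted circuit, and the subdiagram volumes~\eqref{eq:SDV} are precisely the $\min$ of the two partial subsums of $\{c_j\}$. Substituting these into~\eqref{eq:polartoric} and resumming over subsets by cardinality reproduces~\eqref{eq:codim1delta}. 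Either route is legitimate; I would present the Cayley-polytope/triangulation argument as the main line since Theorem~\ref{thm:codim1} has already set it up, and remark that Theorem~\ref{thm:zwei} gives an independent check.

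The main obstacle will be the combinatorial bookkeeping in passing from $\sum_\tau \bigl|\,\sum c_j - \sum c_j\,\bigr|$ to the closed form: one must (i) correctly match the Cayley bidegree to the index $i$ so that the binomial coefficient $\binom{n-1}{i+1}$ comes out right, (ii) verify that \emph{all} $2^{n-1-i}$ (or the appropriate count of) subsets $\tau$ genuinely index simplices of the triangulation — i.e.\ that the degenerate ones (where the minor vanishes) contribute zero volume and hence can be harmlessly included in the sum — and (iii) apply $|a-b| = (a+b)-2\min(a,b)$ uniformly, using $a+b = {\rm deg}(X_A)$, and confirm the number of terms collapses the $(a+b)$-part into exactly $\binom{n-1}{i+1}{\rm deg}(X_A)$. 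None of these is deep, but the indexing is where sign or off-by-one errors would creep in, so that is the step I would write out most carefully.
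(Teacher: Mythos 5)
Your main line (extracting the coefficients of \eqref{eq:volumepoly} from a triangulation of ${\rm Cay}(A,-A)$) is not the route the paper takes, and as written it has a gap at its center. A full-dimensional simplex of a triangulation of ${\rm Cay}(A,-A)$ is determined by an $n$-element subset of the $2n$ columns, i.e.\ by a \emph{pair} of subsets (one in the $P$-copy, one in the $-P$-copy), not by a single $\tau\subset\{1,\dots,n\}$ with $|\tau|=n-1-i$; your claim that a placing triangulation has its simplices ``indexed by those subsets $\tau$ on which the appropriate minor is nonzero'' is precisely the identification that would have to be proved, and no argument is given. Moreover, the identity you invoke to produce the term $\binom{n-1}{i+1}\deg(X_A)$ is false: for a proper subset $\tau$ the two partial sums add up to $\sum_{j\in\tau}c_j$, not to $\deg(X_A)$, so ``$x+y$ is fixed $=\deg(X_A)$ by \eqref{eq:degofhyper}'' does not hold. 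The correct accounting is $\sum_{|\tau|=n-1-i}\sum_{j\in\tau}c_j=\binom{n-1}{i+1}\cdot 2\deg(X_A)$, which shows that \eqref{eq:codim1delta} is equivalent to $\delta_i=\tfrac12\sum_{|\tau|=n-1-i}\bigl|\sum_{j\in\tau\cap\{1,\dots,r\}}c_j-\sum_{j\in\tau\cap\{r+1,\dots,n\}}c_j\bigr|$. Also note the bookkeeping you defer is not innocuous: by \eqref{eq:volumepoly} the coefficient of $\lambda^i\mu^{n-2-i}$ is $\delta_i\binom{n-2}{i}$ (your $\lambda^{i+1}\mu^{n-2-i}$ has the wrong total degree), and the clean statement is that in any triangulation of ${\rm Cay}(A,-A)$ the normalized volumes of the simplices with exactly $i+1$ vertices from the $P$-copy sum to $\delta_i$ itself; matching those simplices and their Gale-minor volumes \eqref{eq:gale} against the $\tau$-sum above is the entire content, and it is missing.

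Your ``alternative'' route is the one the paper actually follows, but your sketch omits the idea that makes it work. The paper first uses Corollary~\ref{cor:PL}: both sides are piecewise linear on explicit cones in the $c$-variables, and pairwise-coprime lattice points are Zariski dense in each cone, so one may assume $\gcd(c_i,c_j)=1$ for all $i,j$. Only under this reduction does one prove that every proper face of $P$ has normalized volume $1$ and that the subdiagram volumes are $\mu(P/\tau)=\min\bigl(\sum_{i\in\tau^+}c_i,\sum_{j\in\tau^-}c_j\bigr)$ with $\mu(\sigma/\tau)=1$ for $\sigma\neq P$, which is \eqref{eq:SDV3}; without it, proper faces need not be unimodular, so your assertion that ``the subdiagram volumes are precisely the min of the two partial subsums'' (and the implicit replacement of ${\rm Vol}(\beta){\rm Eu}(\beta)$ by ${\rm Eu}(\beta)$) is unjustified. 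With \eqref{eq:SDV3} one solves the recursion of Definition~\ref{def:Eu} and substitutes into \eqref{eq:polartoric}. So either route still needs a substantive step you have not supplied: the triangulation/indexing argument in the first, and the coprimality reduction together with the face-volume and subdiagram-volume computation in the second.
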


\begin{proof}
The $(n-2)$-dimensional polytope $P = {\rm conv}(A)$  is simplicial and
has $n$ vertices, provided $1 < r < n$. 
Following \cite[Section 6.5]{Zie}, the minimal non-faces of $P$ are
 $\{1,\ldots,r\}$ and $\{r{+}1,\ldots,n\}$. For $i \leq n-3$, we encode
each $i$-simplex in $\partial P$ by the index set
$\tau \subset \{1,2,\ldots,n\}$ of those columns $a_i$ that are not in that simplex.
 These $\tau$ satisfy
$|\tau| =  n-1-i$, and both $\tau^+ = \tau \cap \{1,\ldots,r\} $
and  $\tau^- = \tau \cap \{r{+}1,\ldots,n\} $ are non-empty.

By Corollary~\ref{cor:PL}, the polar degrees of $X_A$ are
linear functions on certain full-dimensional polyhedral cones in $\RR^n_{>0}$.
The lattice points  $(c_1,\ldots,c_n) $  with
  relatively prime coordinates in such a cone are Zariski dense.
   Every linear function on $\RR^n$ is determined by its values on
  a Zariski dense subset.
  Hence, in what follows, we may assume
that ${\rm gcd}(c_i,c_j) = 1$ for all~$i,j$.

Given this assumption, we claim  
that  ${\rm Vol}(\tau) = 1$ for every proper face $\tau$ of $P$.
Suppose  this does not hold.
Then ${\rm Vol}(\tau) > 1$ for some facet $\tau$,
say $\tau = \{r,n\}$ after relabeling.
This facet is the simplex with vertex set 
$\gamma = \{a_1,\ldots,a_{r-1},a_{r+1},\ldots,a_{n-1}\}$.
There exists $p \in \ZZ \gamma$ such that,
for some $i$, the lattice spanned by
$(\gamma \backslash \{a_i\}) \cup \{p\}$
has index $i_p \geq 2$ in $\ZZ \gamma$. We have
$$
\begin{matrix}
& c_r &=& {\rm Vol}\bigl(\,\gamma \cup \{a_n\}\, \bigr) &=& 
i_p \cdot {\rm Vol}\bigl(\,(\gamma \backslash \{a_i\}) \cup \{p,a_n\}\, \bigr) \\ 
{\rm and} &
c_n &=& {\rm Vol}\bigl(\,\gamma \cup \{a_r\}\, \bigr) &=& 
i_p \cdot {\rm Vol}\bigl(\,(\gamma \backslash \{a_i\}) \cup \{p,a_r\}\, \bigr).
\end{matrix}
$$
So, $i_p$ divides ${\rm gcd}(c_r,c_n)$,  a contradiction.
Hence ${\rm Vol}(\tau) = 1$ for every proper face $\tau$ of $P$.

For every face $\sigma$ of $P$ that contains  $\tau$, the subdiagram volume 
in Definition \ref{def:subdiagram} equals
\begin{equation}
\label{eq:SDV3}
\mu(\sigma/\tau) \,\,= \,\,
\begin{cases}\,
{\rm min} \bigl( \,\sum_{i \in \tau^+} c_i \,,\,
\sum_{j \in \tau^-} c_j \,\bigr) & {\rm if} \,\,\sigma = P ,\\ \qquad \qquad
1 & {\rm otherwise}.
\end{cases}
\end{equation}
With this, we can solve the recursion in Definition~\ref{def:Eu}.
 For a face $\alpha$ of $P$ let 
 $$
{\rm min}_A^{(r)}(\alpha)\,\,=\,\,{\rm min} ( \,\sum_{j \in \alpha \cap \{1,\ldots,r\}} \!\!\!\!\!\!\!c_j \,,\,
\sum_{j \in \alpha\cap \{r+1,\ldots,n\}}\!\!\!\!\!\!\! c_j \,). 
$$ 
From \eqref{eq:SDV3} and Definition \ref{def:Eu} we have 
\begin{equation}
{\rm Eu}(\tau) \,\,\,= \!\!\!\!\!\sum_{ {\beta \neq P {\rm \; s.t\;}\tau }\atop{ {\rm is \; a \; face\; of\;} \beta \; {\rm and} \atop \dim(\beta)=\dim(\tau)+1 }}\!\!\!\!\!\!\!(-1)^{n-\dim(\beta)-1} {\rm min}_A^{(r)}(\beta)
\,\,+\,\,(-1)^{n-\dim(\tau)-1} {\rm min}_A^{(r)}(\tau).
\end{equation}
This results in a formula for the CM volume of $\tau$, as an alternating sum
of expressions  ${\rm min} ( \,\sum_{j \in \sigma^+} c_j \,,\,
\sum_{j \in \sigma^-} c_j \,) $.
When we write the sum in (\ref{eq:Vjay}), and thereafter the sum in
(\ref{eq:polartoric}), a lot of regrouping and cancellation occurs.
The final result is the expression for $\delta_i$ in (\ref{eq:codim1delta}).
\end{proof}

\begin{corollary} \label{cor:codim1ED}
The generic Euclidean distance degree of the toric hypersurface $X_A$ equals
$$  {\rm gEDdegree}(X_A) \,\,=\,\,\, (2^{n-1}-1) \cdot {\rm deg}(X_A)\, \,\,-\,\,\,
 \sum_{\tau \subset \{1,\ldots,n\}}
   \!\!\!
{\rm min}\bigl(\,
\sum_{j \in \tau \cap \{1,\ldots,r\}} \!\!\!\!\!\! c_j\,\,\,,
\sum_{j \in \tau \cap \{r+1,\ldots,n\}}\!\!\!\!\!\!\! c_j\,\, \bigr).
$$
\end{corollary}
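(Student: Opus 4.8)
The plan is to obtain Corollary~\ref{cor:codim1ED} by summing the formula for the polar degrees $\delta_i$ in \eqref{eq:codim1delta} over $i = 0, 1, \ldots, n-2$, using the identity ${\rm gEDdegree}(X_A) = \sum_{i=0}^{n-2} \delta_i(X_A)$ from \eqref{eq:sumofpolar}. There are two contributions to track. First I would handle the polynomial part: summing $\binom{n-1}{i+1}\cdot {\rm deg}(X_A)$ over $i = 0,\ldots,n-2$ gives ${\rm deg}(X_A) \cdot \sum_{k=1}^{n-1}\binom{n-1}{k} = {\rm deg}(X_A)\cdot(2^{n-1}-1)$, since the $k=0$ term is omitted. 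This immediately produces the leading term $(2^{n-1}-1)\cdot{\rm deg}(X_A)$.

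Next I would handle the subtracted part, which is the more delicate bookkeeping step. In \eqref{eq:codim1delta} the sum is over subsets $\tau \subset \{1,\ldots,n\}$ with $|\tau| = n-i-1$; as $i$ ranges over $0,\ldots,n-2$, the quantity $n-i-1$ ranges over $1, 2, \ldots, n-1$. So summing the subtracted terms over all $i$ amounts to summing $\min\bigl(\sum_{j\in\tau\cap\{1,\ldots,r\}} c_j, \sum_{j\in\tau\cap\{r+1,\ldots,n\}} c_j\bigr)$ over every nonempty proper subset $\tau$ of $\{1,\ldots,n\}$, each appearing exactly once (the subset of size $n-i-1$ contributes precisely when we reach index $i$). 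I would then observe that the empty set $\tau = \emptyset$ and the full set $\tau = \{1,\ldots,n\}$ contribute $0$ to the min in either case (the empty subsum is $0$, so the min is $0$; for the full set, $\min({\rm deg}(X_A), {\rm deg}(X_A)) = {\rm deg}(X_A)$ — one has to check this boundary case, but in fact $\tau = \{1,\ldots,n\}$ corresponds to $i = -1$ and is not included, while $\tau = \emptyset$ gives min $0$). Hence extending the index set of the subtracted sum to \emph{all} subsets $\tau \subseteq \{1,\ldots,n\}$ changes nothing, and we arrive at exactly the displayed formula.

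The one subtlety worth spelling out is the treatment of the extreme cardinalities and the hypothesis $1 < r < n$ (which is needed for $P$ to be simplicial with $n$ vertices, as used in the proof of \eqref{eq:codim1delta}); outside that range $X_A$ would be a coordinate subspace and the statement degenerates, so I would note that we assume $2 \leq r \leq n-2$ throughout, consistent with the setup preceding Theorem~\ref{thm:codim1}. The main obstacle is purely combinatorial: verifying that every subset $\tau$ of each cardinality $n-i-1$ is counted with multiplicity exactly one when the outer sum over $i$ is collapsed, and confirming that padding the sum out to include $\tau = \emptyset$ (and noting $\tau = \{1,\ldots,n\}$ simply does not arise) introduces no error. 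Once that is checked, the corollary follows by a one-line addition from Theorem~\ref{thm:codim1}'s polar-degree formula.

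\begin{proof}
By \eqref{eq:sumofpolar} we have ${\rm gEDdegree}(X_A) = \sum_{i=0}^{n-2}\delta_i$.
Summing the first term of \eqref{eq:codim1delta} over $i$ gives
$$
{\rm deg}(X_A)\cdot\sum_{i=0}^{n-2}\binom{n-1}{i+1}
\,\,=\,\,{\rm deg}(X_A)\cdot\sum_{k=1}^{n-1}\binom{n-1}{k}
\,\,=\,\,(2^{n-1}-1)\cdot{\rm deg}(X_A).
$$
For the subtracted term, as $i$ runs over $0,\ldots,n-2$ the value $n-i-1$ runs over $1,\ldots,n-1$,
so summing the second term of \eqref{eq:codim1delta} over $i$ yields
$$
\sum_{\emptyset\neq\tau\subsetneq\{1,\ldots,n\}}
\!\!\!\!\!\!{\rm min}\bigl(\,\textstyle\sum_{j\in\tau\cap\{1,\ldots,r\}}c_j\,,\,\sum_{j\in\tau\cap\{r+1,\ldots,n\}}c_j\,\bigr),
$$
each proper nonempty $\tau$ occurring exactly once (namely when $i = n-1-|\tau|$).
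For $\tau = \emptyset$ the min is $0$, so we may include it without changing the sum;
the full set $\tau = \{1,\ldots,n\}$ corresponds to $i=-1$ and does not occur.
Combining, we obtain the stated formula for ${\rm gEDdegree}(X_A)$.
\end{proof}
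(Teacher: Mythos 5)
Your proposal is correct and is essentially the argument the paper intends: Corollary~\ref{cor:codim1ED} is obtained exactly by summing the polar-degree formula \eqref{eq:codim1delta} over $i=0,\ldots,n-2$ via \eqref{eq:sumofpolar}, with the binomial sum giving $(2^{n-1}-1)\deg(X_A)$ and the subsets $\tau$ of each cardinality $1,\ldots,n-1$ collected exactly once. Your careful treatment of the boundary cases (the empty set contributing $\min=0$ and the full set $\{1,\ldots,n\}$ being excluded, which is the reading of the sum that makes the stated formula correct) is the right resolution of the only subtlety.
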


It is instructive to consider the case of surfaces in $\PP^3$
and to compare with Corollary~\ref{prop:surfaces}.

\begin{example} \rm
Let $n=4$ and $r=2$ and set $D = {\rm deg}(X_A)$.
The polar degrees are
$\,\,\delta_2 = D$,
$ \delta_1 = 3D - {\rm min}(c_1,c_3) - {\rm min}(c_1,c_4)
- {\rm min}(c_2,c_3) - {\rm min}(c_2,c_4) \,=\,
D + {\rm max}\bigl( |c_1-c_2|, |c_3-c_4| \bigr)$,
and
$\, \delta_0\, = \,3D - c_1-c_2-c_3-c_4\, =\,D$.
Their sum gives us the simple formula 
$$ {\rm gEDdegree}(X_A) \, = \, 3D + {\rm max}\bigl( |c_1-c_2|, |c_3-c_4| \bigr). $$
Another toric surface arises for $n=4$ and $r=1$. In that case,
$\delta_0 = \delta_2 = D$ and $\delta_1 = 2D$.
\hfill $\diamondsuit$
\end{example}

The results in this paper furnish exact formulas for the algebraic complexity 
of solving the optimization problems (\ref{eq:opt1}) and (\ref{eq:opt2}).
We close this section with a numerical example.

\begin{example} \rm
Given a  list $(u_1,u_2,u_3,u_4,u_5,u_6)$ of six real measurements,
we seek to find the best approximation by a 
real vector $(x_1,x_2,x_3,x_4,x_5,x_6)$ that satisfies the model
$$ x_1^{22} x_2^{23} x_3^{64} \,\,=\,\, x_4^{26} x_5^{14} x_6^{69}. $$
The general formula in \cite[Corollary~2.10]{DHOST} 
for hypersurfaces of degree $d= 109$ says that
$$ d  \cdot \bigl(1+(d-1)^1+(d-1)^2+(d-1)^3+(d-1)^4+(d-1)^5\bigr) \,\,=\,\, 1,616,535,525,241 $$
is a bound for the algebraic degree of our optimization problem.
Corollary~\ref{cor:codim1ED} shows that the true answer is much smaller:
${\rm gEDdegree}(X_A) = 1348$. {\em Numerical Algebraic Geometry} \cite{BHSW} allows us to
compute {\bf all} complex critical points, and hence {\bf all} local approximations.
\hfill $ \diamondsuit $
\end{example}

\section{Discriminants, Tropicalization and Hypersimplices}
\label{sec:five}

We computed the algebraic degree
of the optimization problem (\ref{eq:opt1}) when
the weight vector $\lambda$ and the data vector $u$ are generic.
This generic behavior fails when these vectors are zeros of
certain discriminants. In what follows we discuss those 
discriminants. Later in this section, we 
explore connections to tropical geometry:
building on \cite{DFS, DT}, we discuss the tropicalization 
of the conormal variety of a toric variety~$X_A$.
Thereafter, we conclude by returning to (\ref{eq:hypersimplexopt1}).

We begin by examining the genericity condition on the weight vector
$\lambda = (\lambda_1,\ldots,\lambda_n)$ that specifies the norm
$|| x ||_\lambda = (\sum_{i=1}^n \lambda_i x_i^2)^{1/2}$.
Following \cite{ottaviani2014exact}, we can define the  ED degree of 
the toric variety $X_A$ 
for any positive $\lambda$. However, it may be smaller than the generic one:
\begin{equation}
\label{eq:specialED}
 {\rm EDdegree}_\lambda(X_A) \,\, \leq \,\,  {\rm gEDdegree}(X_A).  
 \end{equation}
Such a drop occurred for $\lambda = (1,1,\ldots,1)$
in Example~\ref{ex:P1P1}, but not in Example \ref{ex:rnc}.
Similar instances are featured in
\cite[Example 2.7, Corollary 8.7]{DHOST} and
\cite[Examples 1.1,  Table 1, Proposition 4.1]{ottaviani2014exact}.
We now offer a characterization of the weights whose
ED degree is generic.

As before, we write $X_A^\vee$ for the $A$-discriminant, that is, the
projective variety dual to $X_A$. 
If the dual $X_A^\vee$ is a hypersurface in $\PP^{n-1}$ then $\Delta_A$ denotes its defining polynomial.
If ${\rm codim}(X_A^\vee) \geq 2$ then $\Delta_A = 1$.
Following \cite{GKZ} but ignoring exponents, we define the {\em principal $A$-determinant} $E_A$ to be the
product of the polynomials $\Delta_\alpha$ where $\alpha$ runs over all faces of~$A$.

\begin{proposition} \label{ref:eqholds}
Let $\lambda \in \RR^n_{> 0}$ be a weight vector such that the principal $A$-determinant
$E_A$ does not vanish at $\lambda$. Then equality holds in (\ref{eq:specialED}).
\end{proposition}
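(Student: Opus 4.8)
The plan is to relate the special weight vector $\lambda$ to a data-point genericity statement via the change of variables $x_i \mapsto \sqrt{\lambda_i}\, x_i$, and then invoke the known description of when the ED degree drops. First I would recall the key fact, already used in \cite{DHOST} and \cite{ottaviani2014exact}: rescaling coordinates by $\phi_\lambda : x_i \mapsto \sqrt{\lambda_i}\,x_i$ turns the $\lambda$-weighted problem for $X_A$ into the \emph{unit-weight} ED problem for the rescaled variety $\phi_\lambda(X_A)$, with data point $\phi_\lambda(u)$. Hence $\mathrm{EDdegree}_\lambda(X_A) = \mathrm{EDdegree}_1(\phi_\lambda(X_A))$, and equality in \eqref{eq:specialED} holds precisely when $\phi_\lambda(X_A)$ behaves like a ``generic translate/rescaling'' of $X_A$ for the Euclidean ED degree. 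So the genericity of $\lambda$ should be measured by a condition on $\phi_\lambda$, and the claim is that non-vanishing of $E_A$ at $\lambda$ is exactly such a condition.

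Next I would use the characterization of ED degree drop in terms of the isotropic quadric $Q = \{\sum x_i^2 = 0\}$ and the dual variety. By \cite[Theorem 5.4 and its proof]{DHOST} (or the conormal-variety formulation of $\delta_i$), $\mathrm{gEDdegree}(X_A) = \sum_i \delta_i(X_A)$ is attained by the generic-weight problem, and the drop for a specific $\lambda$ occurs exactly when the linear space associated to $(u,\lambda)$ meets $\mathrm{Con}(X_A)$ non-transversally ``at the boundary'', i.e.\ when the intersection picks up excess components supported over coordinate subspaces. Concretely, the critical equations for \eqref{eq:opt2} degenerate only when $\phi_\lambda(u)$ or $\phi_\lambda$ itself lies on the dual variety of $X_A$ or of one of its \emph{coordinate faces} $X_{A_\alpha}$. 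Since the face subvarieties $X_{A_\alpha}$ correspond exactly to the faces $\alpha$ of $P$, their discriminants are the $\Delta_\alpha$, and the product of all of them is by definition $E_A$. Thus the bad locus in $\lambda$-space is contained in $\{E_A = 0\}$, which gives the implication we need.

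The cleanest route to make the previous paragraph rigorous is to argue fiberwise over the faces of $P$. For each face $\alpha$ of $P$, the contribution to the critical-point count that ``escapes to infinity'' or to the toric boundary as $\lambda$ specializes is governed by whether $\lambda$ restricted to the coordinates indexed by $A_\alpha$ is a zero of $\Delta_\alpha$ — this is essentially the content of Esterov's and Matsui–Takeuchi's description of the strata of $X_A$ combined with the Ernström formula \eqref{eq:ErnstromPolarDegree}, since each $\chi(\mathrm{Eu}_{X_A^{(i)}})$ is a sum of local contributions over the faces, and the Euler-obstruction bookkeeping is stable under the linear-section and the rescaling provided no facial discriminant vanishes. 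So, assuming $E_A(\lambda) \neq 0$, every facial discriminant $\Delta_\alpha(\lambda|_{A_\alpha}) \neq 0$, which forces each facial stratum to contribute its full generic count, and summing over faces via \eqref{eq:sumofpolar} and \eqref{eq:EulerObsturctionValueOnOribit} recovers $\mathrm{gEDdegree}(X_A)$ exactly; hence equality in \eqref{eq:specialED}.

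I expect the main obstacle to be the bookkeeping in the last step: one has to verify that the \emph{only} mechanism by which $\mathrm{EDdegree}_\lambda(X_A)$ can be strictly smaller than $\mathrm{gEDdegree}(X_A)$ is a coincidence forcing $\lambda$ onto some $\{\Delta_\alpha = 0\}$, and that no additional ``hidden'' degeneration (e.g.\ tangency not coming from a face, or cancellation among strata) can occur. This amounts to showing that the incidence variety cut out by the critical equations of \eqref{eq:opt2}, viewed as a family over the parameter $\lambda$, is flat (equidimensional of the expected fiber dimension) away from $\{E_A = 0\}$ — a statement one can extract from the conormal/Cayley-type description in Section~\ref{sec:three} together with the principal $A$-determinant's role as the ``boundary discriminant'' of the toric family, as in \cite{GKZ}. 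Everything else is a translation of known facts.
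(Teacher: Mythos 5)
Your first step — rescaling by $\phi_\lambda\colon x_i\mapsto\sqrt{\lambda_i}\,x_i$ so that the $\lambda$-weighted problem for $X_A$ becomes the unit-weight problem for $\lambda^{1/2}X_A$ — is exactly how the paper begins. But from there your argument has a genuine gap: the central implication (a drop in ${\rm EDdegree}_\lambda$ forces $\lambda$ onto some $\{\Delta_\alpha=0\}$) is asserted rather than proved. The criterion you need from \cite[Theorem 5.4]{DHOST} is not a transversality statement about ``the linear space associated to $(u,\lambda)$''; it is the clean condition that the ED degree of $X=\lambda^{1/2}X_A$ is generic whenever ${\rm Con}(X)$ is disjoint from the diagonal $\Delta(\PP^{n-1})\subset\PP^{n-1}\times\PP^{n-1}$ — a condition on $\lambda$ alone, with no reference to $u$. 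The missing computation, which is the actual content of the paper's proof, is to take a point $(x,x)$ in ${\rm Con}(X)\cap\Delta(\PP^{n-1})$ and translate it into a discriminantal condition on $\lambda$: if $x$ has full support, then $x=(t^{a_1}:\cdots:t^{a_n})$ for some $t\in(\CC^*)^d$ and the tangency says that the Laurent hypersurface $\{\sum_i\lambda_i t^{2a_i}=0\}$ is singular at $t$; since squaring the torus coordinates is a finite surjection, the hypersurface $\{\sum_i\lambda_i t^{a_i}=0\}$ is singular as well, i.e.\ $\Delta_A(\lambda)=0$. If some coordinates of $x$ vanish, its support is a facial subset $\alpha$ of the columns of $A$, and the same argument run inside the corresponding torus orbit yields $\Delta_\alpha(\lambda)=0$. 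Taking the product over all faces gives $E_A(\lambda)=0$, which is the contrapositive of the claim. None of this appears in your write-up.

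Your third and fourth paragraphs do not supply a substitute. The Ernstr\"om formula \eqref{eq:ErnstromPolarDegree} and the Esterov--Matsui--Takeuchi Euler-obstruction bookkeeping compute the \emph{generic} polar degrees; they say nothing about which specific $\lambda$ fail to be generic, so ``stability of the bookkeeping when no facial discriminant vanishes'' is not something you can extract from them. The flatness-over-$\lambda$ strategy you sketch at the end could in principle be made into an alternative proof, but you explicitly leave it as the ``main obstacle,'' and it is precisely the step that needs an argument — so as written the proposal reduces the proposition to an unproved claim of equal strength. To repair it, replace those two paragraphs with the diagonal-intersection analysis above (full-support case giving $\Delta_A(\lambda)=0$, facial case giving $\Delta_\alpha(\lambda)=0$), which is short and elementary once \cite[Theorem 5.4]{DHOST} is invoked in its correct form.
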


\begin{proof}
Theorem 5.4 in \cite{DHOST} states
that the ED degree of a variety $X \subset \PP^{n-1}$
agrees with the generic ED degree provided the
conormal variety ${\rm Con}(X)$ is disjoint from the
diagonal $\Delta(\PP^{n-1})$ in 
$\PP^{n-1} \times \PP^{n-1}$.
This refers to the usual Euclidean norm $|\!|\,\, |\!|_{\bf 1}$ on $\RR^n$.
We apply this to the scaled toric variety $X = \lambda^{1/2} X_A$ 
whose points are $\lambda^{1/2} x = (\lambda_1^{1/2} x_1 : \lambda_2^{1/2} x_2 : \cdots:
\lambda_n^{1/2} x_n)$ where $x = (x_1:x_2:\cdots:x_n)  $ runs over $X_A$.
If $x$ has non-zero coordinates then $x \in X_A$ means that
$x= (t^{a_1}: t^{a_2} : \cdots : t^{a_n})$ for some $t \in (\CC^*)^d$.
The ED problem for $X$ with respect to the norm
$|\!|\,\, |\!|_{\bf 1}$ is identical to the ED problem for $X_A$ with respect to
$|\!|\,\, |\!|_\lambda$.

Proposition \ref{ref:eqholds} 
claims that if the inequality in (\ref{eq:specialED}) is strict then $E_A(\lambda) = 0$.
Suppose that the inequality in (\ref{eq:specialED}) is strict.
By \cite[Theorem 5.4]{DHOST}, we know that  ${\rm Con}(X) \cap \Delta(\PP^{n-1}) $ is non-empty. 
Then there exists a point $x \in X_A$ such that
the hyperplane with normal vector $\lambda^{1/2} x $ is tangent to $X$ at the point $\lambda^{1/2} x $.
Let us first assume that $x$ has non-zero coordinates. Then
$ x = (t^{a_1}: t^{a_2} : \cdots : t^{a_n}) $ for some $t \in (\CC^*)^d$.
The tangency condition means that
the hypersurface defined by the Laurent polynomial $\sum_{i=1}^n \lambda_i t^{2 a_i}$ is singular
at the point $t \in (\CC^*)^d$. This implies that the hypersurface in the torus $(\CC^*)^d$ defined by
  the Laurent polynomial $\sum_{i=1}^n \lambda_i t^{a_i}$ is singular.
  We conclude that $\lambda$ lies in $X_A^\vee$, and hence $\Delta_A(\lambda) = 0$.
  
  Suppose now that some of the coordinates $x$ are zero. 
  Then the support of $x$ is a facial subset $\alpha$ of the columns of $A$.
  We now restrict to the torus orbit on $X_A$ given by that subset.
  The hyperplane with normal vector $\lambda^{1/2} x|_\alpha $ is tangent to $X_\alpha$
at the point $\lambda^{1/2} x|_\alpha $ in that orbit.
  By the same argument as in   the previous paragraph, we now find that
  $\Delta_\alpha(\lambda) = 0$.
  
Since the principal $A$-determinant $E_A$  is the product of
the $\alpha$-discriminants $\Delta_\alpha$ for all faces $\alpha$ of $A$,
we conclude that $E_A(\lambda) = 0$ holds whenever the inequality in
(\ref{eq:specialED}) is strict.
\end{proof}

\begin{example} \rm
Let $d=3$, $n=6$, and~$A =  \begin{pmatrix} 
2 & 1 & 1 & 0 & 0 & 0 \\
0 & 1 & 0 & 2 & 1 & 0 \\
0 & 0 & 1 & 0 & 1 & 2 \end{pmatrix} $.
Then $X_A$ is the Veronese surface in $\PP^5$, 
with  ${\rm gEDdegree}(X_A) = 13$, and
(\ref{eq:opt1}) is the problem of finding the best rank $1$ approximation to a given 
symmetric $3 \times 3$-matrix.  The principal $A$-determinant equals
$$ E_A(\lambda) \,\,= \,\,
{\rm det} \begin{small} \begin{pmatrix}
2 \lambda_1 & \lambda_2 & \lambda_3 \\
   \lambda_2 & 2 \lambda_4 & \lambda_5 \\
   \lambda_3 & \lambda_5 & 2 \lambda_6  
   \end{pmatrix} \end{small}
   \cdot    {\rm det}\begin{pmatrix}
2 \lambda_1 & \lambda_2 \\
   \lambda_2 & 2 \lambda_4 \end{pmatrix}
     \cdot    {\rm det}\begin{pmatrix}
2 \lambda_1 & \lambda_3 \\
   \lambda_3 & 2 \lambda_6 \end{pmatrix}
     \cdot    {\rm det}\begin{pmatrix}
2 \lambda_4 & \lambda_5 \\
   \lambda_5 & 2 \lambda_6 \end{pmatrix} \cdot \lambda_1 \lambda_4 \lambda_6.
   $$
If ${\rm EDdegree}_\lambda(X_A)$ drops below $13$ then
this product must be zero. 
We know from \cite[Example 3.2]{DHOST} that
${\rm EDdegree}_\lambda(X_A) $ drops down to $ 3$ when
$\lambda = (1,2,2,1,2,1)$. A computation reveals that
 ${\rm EDdegree}_\lambda(X_A) = 11$
when $\Delta_A(\lambda) \not=0$ but
 one of the $2 \times 2$-determinants vanishes.
\hfill $\diamondsuit$
\end{example}

\begin{remark} \rm
If all proper faces $\alpha$ of $A$ are affinely independent
then $E_A$ and $\Delta_A$ are equal up to a monomial factor,
so they have the same vanishing locus in $\RR^n_{>0}$.
If this holds and if the hypersurface  defined by
$\sum_{i=1}^n x_i = 0 $ inside $X_A$ is non-singular then
the usual Euclidean norm $|\!|\,\, |\!|_{\bf 1}$ exhibits the generic behavior, 
i.e.~${\rm EDdegree}_{\bf 1}(X_A)  = {\rm gEDdegree}(X_A) $.
This explains the generic behavior of $|\!|\,\, |\!|_{\bf 1}$ 
for rational normal curves in Example \ref{ex:rnc}, and for the next example.
\end{remark}

\begin{example} \rm
Consider the toric hypersurface (\ref{eq:onebinomial}). 
By \cite[\S 9.1]{GKZ}, its
$A$-discriminant equals
$$ \Delta_A \,\, \, = \,\,\, c_{r+1}^{c_{r+1}} \cdots c_n^{c_n} \cdot
\lambda_1^{c_1} \cdots \lambda_r^{c_r} \,\,-\,\,
(-1)^D  \cdot
c_1^{c_1} \cdots c_r^{c_r} \cdot
\lambda_{r+1}^{c_{r+1}} \cdots \lambda_n^{c_n} . $$
Hence $|\!|\,\, |\!|_{\bf 1}$ is always ED generic when
$D = {\rm deg}(X_A)$ is odd. If $D$ is even  then the hypothesis
$$ c_1^{c_1} \cdots c_r^{c_r} \,\, \not= \,\,
c_{r+1}^{c_{r+1}} \cdots c_n^{c_n}  $$
ensures that Corollary~\ref{cor:codim1ED}
counts  critical points correctly for the usual Euclidean norm.
\hfill $\diamondsuit$
\end{example}

Suppose now that $\lambda \in \RR^n_{> 0}$ with $E_A(\lambda) \not=0 $
has been fixed. The question arises which data vectors $u \in \RR^n$
exhibit the generic behavior. There are three possible types of
degeneracies:

\begin{itemize}
\item the {\em ED discriminant} \cite{DHOST} concerns collisions of
critical points in the smooth locus of $X_A$;
\item the {\em data singular locus} \cite[\S 2.1]{Hor} concerns critical points in
 the singular locus of $X_A$;
\item the {\em data isotropic locus} \cite[\S 2.2]{Hor} concerns critical points
that satisfy $\sum_{i=1}^n \lambda_i x_i^2 = 0$.
\end{itemize}
A careful study of all three for toric varieties $X_A$ would be worthwhile.
Generally none of these three loci are toric varieties themselves.
We offer some preliminary observations:
\begin{itemize}
\item Example 7.2 in \cite{DHOST} shows that the ED discriminant
is complicated and not toric even when $X_A$ has codimension $1$.
It would be interesting to compute the degree of the ED discriminant for
(\ref{eq:onebinomial}) and to compare it to 
Trifogli's formula in \cite[Theorem 7.3]{DHOST}. 
\item The data singular locus always contains the A-discriminant \cite[Theorem 1]{Hor}.
\item The data isotropic locus always contains the A-discriminant \cite[Theorem 2]{Hor}.
\end{itemize}

The Matsui-Takeuchi formula for the degree of the $A$-discriminant  given in
Theorem \ref{thm:zwei} is an alternating sum of CM volumes of faces of $P$.
A positive formula, as a sum of combinatorial numbers, was given 
independently by Dickenstein et al.~in \cite{DFS}. In fact, Theorem 1.2 in \cite{DFS}
expresses every initial monomial of $\Delta_A$ explicitly in a positive manner.
Such formulas are derived using  Tropical Geometry \cite{MS}. Their
advantage over \cite{MT} is that they furnish start systems for homotopy continuation 
in Numerical Algebraic Geometry \cite{BHSW}.

\smallskip

In what follows we assume familiarity with basics of tropical geometry,
especially on varieties given by monomials in linear forms
\cite[\S 5.5]{MS}. The {\em Horn uniformization} of the
$A$-discriminant \cite[\S 4]{DFS} lifts to
the following parametrization of the conormal variety of $X_A$.

\begin{proposition}
Let $A$ be an integer $d \times n$-matrix as above and $X_A$ its
projective toric variety in $\PP^{n-1}$.
The conormal variety ${\rm Con}(X_A)$ is the closure of the set of points
$(x,y)$ in $\PP^{n-1} \times \PP^{n-1}$,
where $x \in X_A$ and $x \cdot y \in {\rm kernel}(A)$.
Its tropicalization is the set of points $(u,  v)$  in $(\RR^{n}/\RR {\bf 1})^2$
where $u \in {\rm rowspace}(A)$
and $u+v$ is in the co-Bergman fan $\,\mathcal{B}^*(A)$.
\end{proposition}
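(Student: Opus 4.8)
The plan is to handle the two statements separately, treating the set-theoretic description of $\mathrm{Con}(X_A)$ first and then tropicalizing. For the first statement, I would start from the classical fact that the conormal variety is the closure of the incidence set of pairs $(x,y)$ where $x$ is a smooth point of $X_A$ and $y$ is the normal to a tangent hyperplane. The key observation is that for the toric variety parametrized by $x = (t^{a_1}:\cdots:t^{a_n})$, a tangent direction at $x$ is obtained by differentiating along the torus: the vectors $(a_{k,1} t^{a_1},\ldots,a_{k,n} t^{a_n})$ for $k=1,\ldots,d$ span the tangent space (after accounting for the Euler relation coming from $(1,\ldots,1)$ in the row space). Thus $y$ is conormal at $x$ exactly when $\sum_i a_{k,i}\, x_i y_i = 0$ for all $k$, i.e. when the coordinatewise product $x\cdot y = (x_1y_1,\ldots,x_ny_n)$ is annihilated by $A$, that is $x\cdot y \in \ker(A)$. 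Taking closures over all $x\in X_A$ gives the asserted description. (This is exactly the mechanism already used in the transition from \eqref{eq:rank1a} to \eqref{eq:rank1b} and in the proof of Theorem~\ref{thm:codim1}, now stated configuration-theoretically rather than through an explicit binomial ideal.)

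For the tropical statement, I would tropicalize the parametrization just obtained. Writing $x_i = t^{a_i}$, the tropical image of $x$, i.e. $u = \mathrm{trop}(x) \in \RR^n/\RR\mathbf 1$, ranges exactly over $\mathrm{rowspace}(A)$ as $t$ ranges over the torus — this is the standard statement that the tropicalization of a torus orbit under a monomial map is the corresponding linear span of the exponent vectors. So the first defining condition $u\in\mathrm{rowspace}(A)$ drops out immediately. The second condition comes from tropicalizing $x\cdot y \in \ker(A)$. The variety $\ker(A)\subset\CC^n$ is a linear subspace, namely the row space of a Gale dual matrix $B$ of $A$; its nonzero points define (after projectivizing) a linear space whose tropicalization is the Bergman fan of the matroid of $B$ — equivalently the \emph{co-Bergman fan} $\mathcal B^*(A)$ in the terminology of \cite{DFS}, since the matroid of a Gale dual is the dual matroid. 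Under tropicalization, the coordinatewise product $x\cdot y$ becomes the coordinatewise sum $\mathrm{trop}(x)+\mathrm{trop}(y) = u+v$. Hence $(x,y)\in\mathrm{Con}(X_A)$ forces $u\in\mathrm{rowspace}(A)$ and $u+v\in\mathcal B^*(A)$, and conversely every such pair $(u,v)$ is in the tropicalization because the Horn-type parametrization is \emph{surjective onto} the relevant constructible set up to closure, so its tropicalization surjects onto the tropicalization of the image (here one invokes the fundamental theorem of tropical geometry / the fact that tropicalization commutes with coordinatewise multiplication and with the closure of a parametrized family, exactly as in the Horn uniformization argument of \cite[\S4]{DFS}).

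The step I expect to be the main obstacle is the \emph{equality} (rather than mere containment) in the tropical statement: showing that every $(u,v)$ with $u\in\mathrm{rowspace}(A)$ and $u+v\in\mathcal B^*(A)$ actually arises as the tropicalization of a point of $\mathrm{Con}(X_A)$. The subtlety is that the naive parametrization $(x,x^{-1}\cdot w)$ with $x$ in the torus and $w\in\ker(A)$ need not have image whose closure is all of $\mathrm{Con}(X_A)$ in a way that behaves well under tropicalization on the boundary strata, and one must be careful about $\RR\mathbf 1$-quotients and about components of $\ker(A)$ meeting coordinate hyperplanes. The clean way around this is to lift the Horn uniformization of $X_A^\vee$ from \cite[\S4]{DFS} — which is already known to be dominant onto the discriminant — to the conormal variety via the first part of the proposition, so that dominance upstairs is inherited, and then cite the standard result that tropicalization of a dominant map of the form (monomial map composed with linear space) realizes the full Minkowski-type sum of the two tropical pieces. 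I would present the containment carefully and then quote the surjectivity half from the Horn-uniformization machinery, pointing to \cite{DFS, DT} and \cite[\S5.5]{MS} for the tropical linear-space input.
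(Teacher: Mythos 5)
Your proposal is correct and follows essentially the same route as the paper: the paper's proof is a one-line extension of the Horn uniformization results \cite[Proposition 4.1, Corollary 4.3]{DFS} ``keeping track of the tangent hyperplanes,'' which is precisely the argument you spell out (conormality $\Leftrightarrow A(x\cdot y)=0$, then tropicalizing the resulting parametrization by monomials in linear forms via \cite[\S 3--4]{DFS} and \cite[\S 5.5]{MS}). The ``surjectivity'' worry you flag is exactly what that machinery for monomial maps of linear spaces resolves, so your plan closes the gap in the same way the paper's citation does.
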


\begin{proof}
The two statements are staightforward extensions of
\cite[Proposition 4.1]{DFS} and \cite[Corollary 4.3]{DFS} respectively,
obtained by keeping track of the tangent hyperplanes $H_\xi$  at $\xi \in X_A$.
\end{proof}

The tropical variety ${\rm trop}({\rm Con}(X_A))$
is a balanced fan of dimension $n-2$ in
$(\RR^{n}/\RR {\bf 1})^2$.
The description above was used by Dickenstein and Tabera \cite{DT}
to study singular hypersurfaces.

\begin{corollary}
The polar degree $\delta_i(X_A)$ is the number of
points in the intersection
$$ {\rm trop}({\rm Con}(X_A)) \,\cap \, (L_{n-2-i} \times M_{i}) \,\,
\,\, \subset \,\, (\RR^{n}/\RR {\bf 1}) \times  (\RR^{n}/\RR {\bf 1}),
$$
where $L_{n-2-i} $ is a tropical $(n-2-i)$-plane and $M_i$ is a tropical $i$-plane.
These planes can be chosen as in \cite[Corollary~3.6.16]{MS},
and the count is with multiplicities as in \cite[(3.6.5)]{MS}.
\end{corollary}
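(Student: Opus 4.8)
The plan is to reduce the tropical intersection-theoretic statement to the algebraic definition of polar degrees via the multidegree of the conormal variety, using the compatibility of tropicalization with intersection numbers against generic tropical linear spaces. First I would recall that, by Kleiman's definition (as explained in \cite[\S 5]{DHOST} and used throughout Section~\ref{sec:three}), the polar degree $\delta_i(X_A)$ is the coefficient of $h_1^{n-1-i} h_2^{i+1}$ in the multidegree of ${\rm Con}(X_A) \subset \PP^{n-1} \times \PP^{n-1}$; equivalently, $\delta_i(X_A)$ equals the number of points in the intersection of ${\rm Con}(X_A)$ with $\mathbb{L}_{n-2-i} \times \mathbb{M}_i$, where $\mathbb{L}_{n-2-i} \subset \PP^{n-1}$ is a general linear subspace of dimension $n-2-i$ and $\mathbb{M}_i \subset \PP^{n-1}$ is a general linear subspace of dimension $i$ (the dimensions add up to $(n-2-i)+i = n-2 = \dim {\rm Con}(X_A)$, and the codimensions $(i+1)+(n-1-i) = n$ match the bidegree slots correctly).

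Next I would invoke the tropical Bernstein/Bézout machinery for linear spaces: for a projective variety $Z \subseteq \PP^{N}$ and a generic tropical linear space $L$ of complementary dimension, $\deg(Z \cap L)$ computed tropically with the stable-intersection multiplicities of \cite[(3.6.5)]{MS} equals the classical degree, because ${\rm trop}(Z \cap \mathbb{L}) = {\rm trop}(Z) \cap_{\rm st} {\rm trop}(\mathbb{L})$ for sufficiently generic $\mathbb{L}$, and a generic tropical linear space of the right dimension (chosen as in \cite[Corollary~3.6.16]{MS}) realizes ${\rm trop}(\mathbb{L})$ for some generic classical $\mathbb{L}$. Applying this in the product $\PP^{n-1}\times\PP^{n-1}$ — where tropicalization sends the product to $(\RR^n/\RR{\bf 1})\times(\RR^n/\RR{\bf 1})$ and a product of generic linear spaces $\mathbb{L}_{n-2-i}\times\mathbb{M}_i$ tropicalizes to $L_{n-2-i}\times M_i$ — gives that the tropical intersection number of ${\rm trop}({\rm Con}(X_A))$ with $L_{n-2-i}\times M_i$ equals the classical intersection number $\#({\rm Con}(X_A)\cap(\mathbb{L}_{n-2-i}\times\mathbb{M}_i)) = \delta_i(X_A)$. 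The balancedness and purity of dimension of ${\rm trop}({\rm Con}(X_A))$ needed for stable intersection to be well-defined were already recorded in the paragraph preceding the corollary.

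The main obstacle, and the only point requiring genuine care, is the genericity: a \emph{product} $L_{n-2-i}\times M_i$ of tropical linear spaces is not itself a generic tropical linear space in $(\RR^n/\RR{\bf 1})^2$, so one must verify that it is nonetheless generic enough with respect to the specific fan ${\rm trop}({\rm Con}(X_A))$ for stable intersection to equal set-theoretic intersection with the correct multiplicities and to match the classical count. The clean way to handle this is to observe that the two factor projections $\pi_1,\pi_2$ of ${\rm Con}(X_A)$ are the toric variety $X_A$ and its dual $X_A^\vee$, that the multidegree is by definition computed by intersecting with a \emph{product} of generic classical linear spaces, and then to cite the compatibility of tropicalization with such product intersections — this is exactly the content behind \cite[Corollary~3.6.16]{MS} and the discussion of tropical linear spaces in \cite[\S 5.5]{MS}, together with the Horn-uniformization description of ${\rm Con}(X_A)$ from the preceding proposition, which writes ${\rm trop}({\rm Con}(X_A))$ concretely in terms of the row space of $A$ and the co-Bergman fan $\mathcal{B}^*(A)$ and thereby makes the stable intersection with $L_{n-2-i}\times M_i$ transparent. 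I expect the write-up to be short: cite \cite[Proposition 4.1]{DFS}, \cite[Corollary 4.3]{DFS}, the preceding proposition, and \cite[Corollary~3.6.16]{MS}, \cite[(3.6.5)]{MS}, then apply the definition of polar degree from \cite[\S 5]{DHOST}.
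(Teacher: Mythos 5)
Your outline is the argument the paper intends: the corollary carries no separate proof in the paper, being meant as an immediate consequence of the preceding proposition describing ${\rm trop}({\rm Con}(X_A))$, of the identification of the polar degrees with the multidegree coefficients of the conormal variety (as in \cite[\S 5]{DHOST} and Example \ref{ex:surfaceinP5}), and of the stable-intersection statements \cite[Corollary~3.6.16, (3.6.5)]{MS}; your emphasis on the genericity of a \emph{product} of tropical linear spaces is indeed the one point needing care, and your way of handling it (generic translates, compatibility of tropicalization with intersections against products of generic classical linear spaces) is fine in principle.

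However, your pivotal reduction is false as written. You assert that $\delta_i(X_A)$ equals the number of points of ${\rm Con}(X_A)\cap(\mathbb{L}_{n-2-i}\times\mathbb{M}_i)$ for general classical linear subspaces of dimensions $n-2-i$ and $i$, on the grounds that these dimensions sum to $n-2=\dim{\rm Con}(X_A)$. Complementarity for a finite intersection requires the dimensions of the linear spaces to sum to the ambient dimension minus $\dim{\rm Con}(X_A)$, i.e.\ to $(2n-2)-(n-2)=n$; equivalently their codimensions must sum to $\dim{\rm Con}(X_A)=n-2$, not to $n$ as you computed. With your choice the product has codimension $n$, so the corresponding product of classes vanishes for degree reasons and the intersection is generically empty; it does not extract the coefficient of $h_1^{n-1-i}h_2^{i+1}$. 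That coefficient is extracted by general linear spaces of dimensions $n-1-i$ and $i+1$ (codimensions $i$ and $n-2-i$), and hence the tropical count must use an $(n-1-i)$-dimensional and an $(i+1)$-dimensional tropical plane — recall that in the convention of \cite{MS} a tropical $k$-plane is $k$-dimensional. A quick test: for $n=3$ and $X_A$ a conic, ${\rm trop}({\rm Con}(X_A))$ is a one-dimensional fan in $(\RR^3/\RR{\bf 1})^2\cong\RR^4$, and with $i=1$ your $L_0\times M_1$ is also one-dimensional, so the stable intersection is empty, whereas $\delta_1=\deg X_A=2$; the correct choice $L_1\times M_2$ gives $2$. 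In other words, the indices in the statement are shifted by one in each factor relative to the MS convention, and rather than detecting this your dimension count was adjusted to make the printed indices look complementary when they are not. A correct write-up either works with the dimensions $n-1-i$ and $i+1$ throughout (flagging the discrepancy with the printed statement) or explains an alternative indexing convention; as it stands, the first ``equivalently'' in your argument is wrong, and the subsequent (otherwise sound) tropicalization step transfers an incorrect classical identity.
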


In analogy to \cite[Theorem 1.2]{DFS},
this corollary can be translated into an explicit positive formula
for the polar degrees and hence for the ED degree of $X_A$.
This should be useful for developing homotopy methods for solving
the critical equations, which can now be written~as
\begin{equation}
\label{eq:critical3} \qquad
 x +  y \,=\, u\, ,\,\,\,\,
x \in \tilde X_A\,\,\,\, {\rm and} \,\,\,\, x \cdot y \in {\rm kernel}(A) 
\qquad \qquad \hbox{for $\lambda = {\bf 1}$.}
\end{equation}
This formulation arises from 
\cite[Theorem 5.2]{DHOST},
where all varieties are regarded as affine cones.

\smallskip

We now return to the optimization problem (\ref{eq:hypersimplexopt1}).
Here $n = \binom{d}{k}$ and $A$ is the matrix
whose columns are the vectors in $\{0,1\}^d$ that have precisely
$k$ entries equal to $1$. The $(d-1)$-dimensional polytope
$P = {\rm conv}(A)$ is the {\em hypersimplex} $\Delta_{d,k}$.
The toric variety $X_A$ represents generic torus orbits
on the Grassmannian of $k$-dimensional linear subspaces in $\mathbb{C}^d$.
The degree of $X_A$ is the volume of $\Delta_{d,k}$. This
is known (by~\cite{Stanley}) to equal the {\em Eulerian number} $A(d-1,k-1) $.
In what follows we determine the CM volumes, polar degrees and gED degree
for the hyperpsimplex $\Delta_{d,k}$. 
Table \ref{tab:EDhypersimplex} offers a summary of all values for $d \leq 8$.
Here we may assume $2 \leq k \leq \lfloor d/2  \rfloor $ 
because the cases $(d,k)$ and $(d,d-k)$
are isomorphic.
\begin{table}[h]
$$
\begin{matrix}
d & k &  \hbox{Chern-Mather volumes} & \hbox{Polar degrees} & \hbox{gED degree} \\
4 & 2 & (12, 12, 8, 4) & (4, 12, 8, 4) & 28 \\
5 & 2 & (20, 30, 30, 25, 11) &  (5, 20, 40, 30, 11) &  106  \\
6 & 2 & (30, 60, 80, 90, 72, 26) & (6, 30, 80, 120, 84, 26) &  346  \\
6 & 3 & (60, 90, 120, 150, 132, 66) & ({\bf 96}, 300, 480, 480, 264, 66) & 1686 \\
7 & 2 & (42, 105, 175, 245, 273, 189, 57) & (7, 42, 140, 280, 336, 210, 57) & 1072 \\
7 & 3 & (105, 210, 350, 560, 714, 644, 302) &\! (315, \! 1302, \! 2940, \! 3920, \! 3192, \! 1470,\!  302) \!& 13441 \\ 
8 & 2 & \!(56, 168, 336, 560, 784, 784, 464, 120) \! & (8, 56, 224, 560, 896, 896, 496, 120) & 3256 \\
8 & 3 & (168, 420, 840, 1610, 2632, \ldots, 1191) & (848, 4256, 12096, 21280, 
\ldots, 1191) & 86647  \\
8 & 4 & (280, 560, 1120, 2240, \ldots, 2416) & 
(3816, 16016, 38976, 60480, \ldots 2416) & 
236104 \\
\end{matrix}
$$
\caption{\label{tab:EDhypersimplex} Computing the generic ED degree for the
toric variety of the hypersimplex $\Delta_{d,k}$}
\end{table}

A couple of observations are in place.
The last entry in the respective vectors is the
Eulerian number ${\rm Vol}(\Delta_{d,k}) = A(d-1,k-1)$.
The ED degree is the sum of the polar degrees.
The first polar degree $\delta_0$ is the
degree of the $A$-discriminant $\Delta_A$.
For $k=2$ this is simply the determinant of the symmetric
matrix with zero diagonal entries. For instance, for $d=4$,
\begin{equation}
\label{eq:missingdiagonal}
 \Delta_A (\lambda) \,\,  =\,\,
{\rm det} \begin{pmatrix}
 0 & \lambda_{12} & \lambda_{13} & \lambda_{14} \\
 \lambda_{12} & 0 & \lambda_{23} & \lambda_{24} \\
 \lambda_{13} & \lambda_{23} & 0 & \lambda_{34} \\
 \lambda_{14} & \lambda_{24} & \lambda_{34} &  0 \end{pmatrix} .
 \end{equation}
 A key point is that $\Delta_A(\lambda) \not=0 $
 when $\lambda = (1,\ldots,1)$. This ensures that the usual
 Euclidean metric is generic for  (\ref{eq:hypersimplexopt1}).
 There is no degree drop due to the weights $\lambda$ being~special.
 
   We close by presenting general formulas for the Chern-Mather volumes
   of hypersimplices:
   
\begin{proposition}
\label{prop:hypersimplex}   
The Chern-Mather volumes for the hypersimplex $\Delta_{d,k}$ are
$$
\begin{matrix}
       V_0 &=& \quad \binom{d}{k} \cdot {\rm min}(k,d-k) & \\
    V_\ell & = & \sum_{i=1}^{{\rm min}(k,\ell)} \! \binom{d}{\ell+1} \binom{d-\ell-1}{k-i} \cdot A(\ell,i-1) 
   & \qquad {\rm for} \,\,\, \ell = 1,\ldots,d-1.
   \end{matrix}
   $$
For $\ell = d-1$ this formula gives the Eulerian number $V_{d-1} = A(d-1,k-1) =
{\rm Vol}(\Delta_{d,k})$.
\end{proposition}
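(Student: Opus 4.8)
The plan is to read off $V_\ell$ directly from Definition~\ref{def:Vj}, by determining for every face $\beta$ of the hypersimplex $P=\Delta_{d,k}$ both its normalized volume ${\rm Vol}(\beta)$ and its Euler obstruction ${\rm Eu}(\beta)$. Recall the facial structure of $\Delta_{d,k}$: a face $\beta$ is obtained by imposing $x_i=0$ for $i$ in a subset $S_0$ and $x_i=1$ for $i$ in a disjoint subset $S_1$, and, writing $S=[d]\setminus(S_0\cup S_1)$, the coordinate projection $x\mapsto x|_S$ is a lattice isomorphism from $\beta$ onto the smaller hypersimplex $\Delta_{|S|,\,k-|S_1|}$. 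Hence a face of dimension $\ell\ge 1$ is lattice-isomorphic to $\Delta_{\ell+1,i}$ for a unique $i$ with $1\le i\le\min(\ell,k)$, there are $\binom{d}{\ell+1}\binom{d-\ell-1}{k-i}$ of each such type, and by the Stanley--Laplace evaluation ${\rm Vol}(\Delta_{m,j})=A(m-1,j-1)$ \cite{Stanley} each has normalized volume $A(\ell,i-1)$. The vertices are the $\binom dk$ points of $\{0,1\}^d$ with $k$ ones, each of normalized volume $1$.

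Next I would compute ${\rm Eu}(\beta)$ by downward induction on $\dim\beta$, using the recursion in Definition~\ref{def:Eu}. The faces $\alpha$ properly containing $\beta$ are obtained by releasing $a\ge 0$ of the $S_0$-coordinates and $b\ge 0$ of the $S_1$-coordinates with $(a,b)\neq(0,0)$ (and $a,b\ge 1$ when $\beta$ is a vertex); each such $\alpha$ is again a hypersimplex face, and in the quotient lattice $M_\alpha/M_\beta\simeq\ZZ^{a+b}$ of Definition~\ref{def:subdiagram} one computes the configuration $A_\alpha/\beta$ explicitly. When $\dim\beta\ge 1$, the origin is a simple vertex of ${\rm conv}(A_\alpha/\beta)$ whose edge vectors are the $a$ standard basis vectors $e_i$ and the $b$ vectors $-e_j$ indexed by the released coordinates; these form a $\ZZ$-basis of $\ZZ^{a+b}$, so deleting the origin removes a unimodular corner simplex and $\mu(\alpha/\beta)=1$. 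When $\beta$ is a vertex, ${\rm conv}(A_\alpha/\beta)$ is a translate of $\Delta_{a+b,b}$, and deleting the origin removes the pyramid over its vertex figure $\Delta_{b,1}\times\Delta_{a,1}$, which lies at lattice distance one from the origin; hence $\mu(\alpha/v)={\rm Vol}(\Delta_{b,1}\times\Delta_{a,1})=\binom{a+b-2}{a-1}$.

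Substituting these subdiagram volumes into Definition~\ref{def:Eu} and grouping the $\binom{|S_0|}{a}\binom{|S_1|}{b}$ faces of each shape, the case $\dim\beta\ge 1$ collapses, by the binomial theorem $\sum_{m\ge 0}(-1)^m\binom Nm=0^{N}$ together with $|S_0|+|S_1|\ge 1$ (since $\beta\neq P$), to ${\rm Eu}(\beta)=1$; the base case is ${\rm Eu}(P)=1$. For a vertex $v$, every $\alpha\supsetneq v$ has dimension $\ge 1$, so ${\rm Eu}(\alpha)=1$ by the previous step and
$$ {\rm Eu}(v)\;=\;\sum_{a,b\ge 1}(-1)^{a+b}\binom{d-k}{a}\binom{k}{b}\binom{a+b-2}{a-1}. $$
Rewriting $\binom{a+b-2}{a-1}=\binom{a+b-2}{b-1}$, using the symmetry $a\leftrightarrow b$ to assume $k\le d-k$, applying upper negation, and then summing over $a$ by the Vandermonde identity, all terms but one cancel and the sum equals $\min(k,d-k)$.

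It remains to assemble $V_\ell=\sum_{\dim\beta=\ell}{\rm Vol}(\beta)\,{\rm Eu}(\beta)$: for $\ell=0$ this is $\binom dk\,\min(k,d-k)$; for $1\le\ell\le d-1$ it is $\sum_{i=1}^{\min(\ell,k)}\binom{d}{\ell+1}\binom{d-\ell-1}{k-i}A(\ell,i-1)$; and for $\ell=d-1$ the only face is $P$ itself (type $i=k$), so $V_{d-1}=A(d-1,k-1)={\rm Vol}(\Delta_{d,k})$. These are exactly the asserted formulas. I expect the crux to be the second paragraph — identifying the local configurations $A_\alpha/\beta$ and their subdiagram volumes, in particular keeping the quotient lattices straight and verifying that deleting a vertex of a hypersimplex excises precisely a pyramid over its vertex figure — together with the binomial manipulation that collapses the alternating sum for ${\rm Eu}(v)$ to $\min(k,d-k)$.
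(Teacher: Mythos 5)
Your proposal is correct and takes essentially the same route as the paper's proof: run the recursion of Definition~\ref{def:Eu} over the face poset of $\Delta_{d,k}$, using that every face is again a hypersimplex, that $\mu(\alpha/\beta)=1$ whenever $\dim(\beta)\geq 1$, and that the subdiagram volume at a vertex of a face isomorphic to $\Delta_{m,j}$ is ${\rm Vol}(\Delta_{j-1}\times\Delta_{m-j-1})=\binom{m-2}{j-1}$. The paper's own argument is terser, recording only these subdiagram volumes and leaving the face count, the collapse ${\rm Eu}(\beta)=1$, and the alternating-sum evaluation ${\rm Eu}(v)=\min(k,d-k)$ implicit, so your write-up simply carries out those same steps in detail.
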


\begin{proof}
We apply the algorithm at the end of Section \ref{sec:three}
to the face poset of $\Delta_{d,k}$. Since 
every face of the hypersimplex is a hypersimplex,
it is convenient to proceed by induction. The base step 
is the subdiagram volume of a vertex of $\Delta_{d,k}$.
Each vertex has $(d-k)k$ neighbors.
These lie on a hyperplane in the ambient $(d-1)$-space.
Their convex hull is a product of simplices
$\Delta_{k-1} \times \Delta_{d-k-1}$. The normalized volume of such a product equals
$ \binom{d-2}{k-1}$. Hence the subdiagram volume of
 any vertex at $\Delta_{d,k}$ is $\binom{d-2}{k-1}$.
 The vertex figures of any positive-dimensional face at $\Delta_{d,k}$
is a simplex. In fact, the toric variety $X_{\Delta_{d,k}}$ has
isolated singularities. Hence  $\mu(\alpha/\beta) = 1$ 
for all subdiagram volumes at faces $\beta$ with ${\rm dim}(\beta) \geq 1$.
\end{proof}

From Proposition 4.7 one easily computes
the polar degrees  (\ref{eq:polartoric})
and the ED degree (\ref{eq:EDtoric}).
This solves an open problem, namely   to determine the
 degree of the $A$-discriminant for $k \geq 3$. This  was asked 
   for $d=6$ and $k=3$ in \cite[Problem 7]{HSYY}.
    Table~\ref{tab:EDhypersimplex} reveals that the answer is~$96$.

\bigskip \bigskip

\begin{small}

\noindent
{\bf Acknowledgements.}
We are grateful to Bernt Ivar N{\o}dland, Ragni Piene and Anna Seigal
for helpful comments on this paper.
We thank an anonymous referee for providing numerous comments that led
to improvements.
Martin Helmer was supported by an NSERC postdoctoral fellowship.
  Bernd Sturmfels was supported by the
  US National Science Foundation (DMS-1419018).
\end{small}

\bigskip

\begin{small}

\bibliographystyle{plain}

\end{small}

\bigskip

\noindent
\footnotesize {\bf Authors' address:}

\smallskip

\noindent 
Department of Mathematics, University of California, Berkeley, CA 94720, USA \\
{\tt martin.helmer@berkeley.edu}, \ {\tt bernd@berkeley.edu}

\end{document}